\theoremstyle{plain}
\newtheorem{thm}{Theorem}
\newtheorem*{thm*}{Theorem}
\newtheorem*{cla*}{Claim}
\newtheorem*{lem*}{Lemma}
\newtheorem{prop}[thm]{Proposition}
\theoremstyle{remark}
\newtheorem{rmk}{Remark}
\numberwithin{equation}{section}
 \def\XXint#1#2#3{{\setbox0=\hbox{$#1{#2#3}{\int}$}
 \vcenter{\hbox{$#2#3$}}\kern-.5\wd0}}
\begin{document}
\author{Susanna Risa  \footnote{Dipartimento di Matematica, Universit\`a di Roma ``Tor Vergata'', Via della Ricerca Scientifica 1, 00133, Roma, Italy. E-mail: risa@mat.uniroma2.it}
, Carlo Sinestrari \footnote{Dipartimento di Ingegneria Civile e Ingegneria Informatica, Universit\`a di Roma ``Tor Vergata'', Via Politecnico 1, 00133, Roma, Italy. E-mail: sinestra@mat.uniroma2.it}
}
\title{Ancient solutions of geometric flows with curvature pinching}
\date{}
\maketitle
\begin{abstract}
We prove rigidity theorems for ancient solutions of geometric flows of immersed submanifolds. Specifically, we find pinching conditions on the second fundamental form that characterise the shrinking sphere among compact ancient solutions for the mean curvature flow in codimension greater than one, and for some nonlinear curvature flows of hypersurfaces.
\end{abstract}\medskip


\section{Introduction}

In this paper, we study ancient solutions of the mean curvature flow and other geometric evolutions of immersed submanifolds. We recall that a solution is called {\em ancient} if it is defined on a time interval of the form $(-\infty,T)$.  These solutions typically arise as limits of rescalings and model the asymptotic profile of the flow near a singularity, see e.g \cite{H2}. They have also been considered in theoretical physics, where they appear as steady state solutions of the renormalization-group flow in the boundary sigma model \cite{BS, LVZ}.

In recent years, various researchers have investigated ancient solutions of the mean curvature flow in codimension one. In particular, attention has been focused on ancient solutions which are convex, since this property is enjoyed by the blowup limits of general mean convex solutions, see \cite{HS99,W03}. The easiest example is provided by the shrinking sphere, which is the only compact convex homothetically contracting solution. An example of an ancient solution which is not self-similar is the so-called {\em Angenent oval} \cite{An1}. It is a convex solution of curve shortening flow in the plane which has larger and larger eccentricity as $t \to -\infty$.  Compact convex ancient solutions in the plane have been completely classified by Daskalopoulos-Hamilton-Sesum \cite{DHS} to be either shrinking round circles or Angenent ovals.

In higher dimensions, other examples of non-homothetical ancient solutions of the mean curvature flow have been constructed and analysed in \cite{Ang, ADS, BLT, HH, W03}. These examples suggest that the class of convex compact ancient solutions of the mean curvature flow is wide and that a complete classification is difficult to obtain. However, it is possible to prove some rigidity results which provide a partial structural description of this class. The typical result of this kind is the following, proved in \cite{HS}, see also \cite{HH}, stating that an ancient solution with uniformly pinched principal curvatures is necessarily a shrinking sphere.

\begin{thm}\label{Hui-Sin}
Let $\{M_t\}_{t \in (-\infty,0)}$ be a family of closed convex hypersurfaces of $\mathbb{R}^{n+1}$ evolving by mean curvature flow, with $n>1$. Let $\lambda_1$ denote the smallest principal curvature of $M_t$ and $H$ the mean curvature. Suppose that there exists $\epsilon>0$ such that $\lambda_1 \geq \epsilon H$ on $M_t$ for every $t$. Then $M_t$ is a family of shrinking spheres.
\end{thm}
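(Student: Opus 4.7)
My approach rests on three ingredients: a preserved pinching from Huisken's 1984 paper, the monotonicity of a scale-invariant traceless-curvature quantity, and a backward blow-down to an ancient limit whose classification forces the original flow to be a round sphere.

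\emph{Preserved pinching and a monotone scalar.} By Hamilton's tensor maximum principle, as applied by Huisken in 1984, the condition $\lambda_1 \geq \epsilon H$ is preserved under the flow, so each $M_t$ is strictly convex and satisfies $|\mathring A|^2 \leq (1-n\epsilon^2)\tfrac{n-1}{n}H^2$, where $\mathring A = A - \tfrac{H}{n}g$. I would then consider the scale-invariant function
$$Q := \frac{|\mathring A|^2}{H^2},$$
which is nonnegative and vanishes precisely at umbilical points. Using Huisken's 1984 computation, the reaction terms of $|\mathring A|^2$ and $H^2$ almost cancel in the evolution of $Q$, and what remains is a Simons-type gradient expression that the pinching renders nonpositive. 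At a spatial maximum of $Q$, where $\nabla Q = 0$ and $\Delta Q \leq 0$, this yields $\partial_t(\max_{M_t} Q) \leq 0$, so $t \mapsto \max_{M_t} Q$ is nonincreasing on $(-\infty,0)$, and
$$L := \lim_{t \to -\infty} \max_{M_t} Q \in \left[0,(1-n\epsilon^2)\tfrac{n-1}{n}\right]$$
exists.

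\emph{Backward blow-down.} To force $L = 0$ I would fix a sequence $t_k \to -\infty$ and rescale the flow parabolically around $t_k$ with scaling factor $\lambda_k$ chosen so that $\lambda_k\,\operatorname{diam}(M_{t_k})$ stays between positive constants. The rescaled flow inherits $\lambda_1 \geq \epsilon H$, hence $|A|^2 \leq C H^2$ with $H$ controlled by the now-bounded diameter; by parabolic regularity and Arzel\`a--Ascoli, a subsequence converges smoothly to an ancient solution $M^\infty_s$. Because $Q$ is scale-invariant and $\max_{M_t} Q \to L$, one gets $\max_{M^\infty_s} Q \equiv L$ on the whole existence interval of the limit.

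\emph{Rigidity and conclusion.} Since $\max Q \equiv L$ on $M^\infty$ and $\partial_s \max Q \leq 0$ with the defect given by the Simons-type term of the first step, equality in that inequality must hold at every point and time of $M^\infty$. The equality case of the Simons term, analyzed as in Huisken 1984, forces $\nabla A \equiv 0$ on $M^\infty$; by Codazzi each time slice $M^\infty_s$ is totally umbilical, and being closed and convex it must be a round sphere, so $L = 0$. Returning to the original flow, monotonicity of $\max_{M_t} Q$ together with $\lim_{t \to -\infty}\max_{M_t} Q = 0$ forces $\max_{M_t} Q \equiv 0$, hence every $M_t$ is a round sphere and $\{M_t\}$ is a family of shrinking spheres.

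\emph{Main obstacle.} The delicate points are the backward compactness and the rigidity of the limit: one must ensure that the rescaled sequence converges smoothly to a nondegenerate closed ancient solution, excluding degenerations such as cylinders, planes, or pieces escaping to infinity --- this is where the pinching is used essentially --- and one must then extract from constancy of $\max Q$ in time enough information to conclude $\nabla A \equiv 0$, rather than merely some weaker symmetric identity. The first difficulty is handled by the uniform pinching and the diameter normalization; the second amounts to identifying the null directions of the ``good'' gradient term in Huisken's computation.
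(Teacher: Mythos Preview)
Your argument has a genuine gap in the very first step: the pointwise monotonicity of $\max_{M_t} Q$ with $Q=|\mathring A|^2/H^2$ does \emph{not} follow from Huisken's computation under the hypothesis $\lambda_1\ge \epsilon H$ for an arbitrary $\epsilon>0$. If you carry out the evolution of $Q$, the reaction terms do cancel, but at a spatial maximum you are left with
\[
\partial_t Q \;\le\; -\frac{2}{H^2}\Big(|\nabla A|^2-\big(\tfrac{1}{n}+Q\big)|\nabla H|^2\Big),
\]
and the only general gradient estimate available is $|\nabla A|^2\ge \tfrac{3}{n+2}|\nabla H|^2$. This makes the right-hand side nonpositive only when $Q\le \tfrac{2(n-1)}{n(n+2)}$. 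However, the pinching $\lambda_1\ge\epsilon H$ with small $\epsilon$ allows $Q$ arbitrarily close to $\tfrac{n-1}{n}$ (take $n-1$ curvatures equal to $\epsilon H$ and one equal to $(1-(n-1)\epsilon)H$), which is strictly larger. So the sign you need is simply not there, and neither the monotonicity nor the subsequent rigidity step can get started. This is precisely why the original argument in \cite{HS} (and the high-codimension analogue in \S2.1 here) does \emph{not} use the maximum principle on $Q=f_0$: one passes to $f_\sigma=|\mathring A|^2/H^{2(1-\sigma)}$ with small $\sigma>0$, works with $L^p$ norms, and combines a Poincar\'e-type inequality with a volume bound to force $\int f_\sigma^p\to 0$ as $T_0\to-\infty$.

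By contrast, the paper's alternative route (Theorem~\ref{teorho1}, which contains the mean curvature flow case) avoids pointwise monotonicity altogether. It first uses the pinching to obtain two-sided bounds on $\rho_\pm$ and $F$ at the natural rate (Theorem~\ref{teorbound0}), then performs a continuous rescaling, obtains uniform parabolicity and Krylov--Safonov estimates, and exploits an \emph{integral} monotone quantity from \cite{A1}, namely $\int \widetilde K(\eta^p-\eta_0^p)\,d\widetilde\mu$ with $\eta=|\widetilde W|/\widetilde F$. Integrating its derivative over $(-\infty,\tau_1]$ forces $\int |\widetilde\nabla\widetilde W|^2\to 0$ along a sequence, hence the backward limit is a sphere, and monotonicity then propagates this to all times. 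If you want to salvage a blow-down strategy, you should replace your pointwise $\max Q$ by such an integral functional; the compactness part of your outline (diameter normalization plus pinching giving curvature bounds) is fine, but it needs the input of Theorem~\ref{teorbound0} rather than an unproven monotonicity of $\max Q$.
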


Several other characterizations of the sphere have been obtained, e.g. in terms of a control on the diameter growth as $t \to -\infty$, or on the ratio between the outer and inner radius, see \cite{HH,HS}. We remark that an equivalence similar to Theorem \ref{Hui-Sin}, involving pinching properties of the intrinsic curvature, has been obtained in \cite{BHS} for ancient solutions of the Ricci flow. Another interesting rigidity result, proved by Haslhofer and Kleiner \cite{HK}, is that a closed mean convex ancient solution which is uniformly noncollapsed in the sense of Andrews \cite{A4} is necessarily convex. A similar statement has been obtained by Langford \cite{L1} replacing the noncollapsing property by the assumption that $\lambda_1/H$ is bounded from below. After this, Langford and Lynch \cite{LL} have generalized the above results to a large class of flows with speed given by a function of the curvatures homogeneous of degree one. Very recently, Bourni, Langford and Tinaglia \cite{BLT} have showed uniqueness of rotationally symmetric collpased ancient solutions in all dimensions.

Various authors have also investigated the case of the mean curvature flow in the sphere $\mathbb{S}^{n+1}$. In this case, there is an ancient solution consisting of shrinking geodesic $n$-dimensional spheres, converging to an equator as $t \to -\infty$ and to a point as $t \to 0$. In \cite{HS}, some characterizations of the shrinking round solution in terms of curvature pinching have been given. Compared to the case of Euclidean ambient space, it can be observed that the positive curvature of the ambient space increases rigidity and weaker pinching conditions suffice to obtain the result. 
A strong result in this context has been obtained by Bryan, Ivaki and Scheuer \cite{BIS}, who considered curvature flows on the sphere $\mathbb{S}^{n+1}$ for very general speed functions, and proved that the shrinking geodesic sphere is the only closed convex ancient solution with bounded curvature for large negative times. In dimension $1$ (i.e. for Curve Shortening Flow), Bryan and Louie \cite{BrLo} have proved that embeddedness is sufficient to conclude that a closed ancient curve in $\mathbb{S}^2$ is either a shrinking circle or an equator.

The aim of this paper is to derive rigidity results for ancient solutions of more general curvature flows, by considering either mean curvature flow in higher codimension, or the hypersurface case with more general speeds than mean curvature.
We consider various kinds of flows, but all our results are in a similar spirit to Theorem \ref{Hui-Sin}, showing that a suitable uniform pinching condition characterizes the shrinking sphere among convex ancient solutions. The choice of the pinching condition is inspired in many cases by the previous works in the literature that have analysed the formation of singularities in finite time of convex solutions. We give here an outline of the paper and summarize our main results.

In Section 2 we consider mean curvature flow of general codimension. After some preliminaries, we consider in \S 2.1 the flow in Euclidean space
\begin{equation} \label{MCFH}
\frac{\partial \varphi}{\partial t} (x,t) = \textbf{H}(x,t),
\end{equation}
where $\varphi (\cdot,t)$ is a family of immersions of a closed $n$-dimensional manifold $M^n$ into $\mathbb{R}^{n+k}$ and $\textbf{H}$ is the mean curvature vector. Here both $n$ and $k$ are at least $2$. We consider ancient solutions satisfying a pinching condition of the form $|h|^2 \leq C |\textbf{H}|^2$, where $|h|$ is the norm of the second fundamental form and $C$ is a suitable constant depending on $n$ (see \eqref{pinch}). The same condition was considered by Andrews and Baker \cite{AB} in the study of finite-time singularities; recently, a similar result has been obtained by Pipoli and Sinestrari for submanifolds of the Complex Projective Space \cite{pipolisin}. We adapt the method of \cite{HS} by using the estimates of Andrews and Baker, in addition we follow a technique by Hamilton \cite{H2} to obtain a bound on the intrinsic diameter in general codimension. In this way, we prove that the only ancient solutions satisfying the pinching condition are the $n$-dimensional shrinking spheres. In \S 2.2 we obtain analogous results for the same flow when the manifolds are immersed in the sphere. As in the previous works in codimension one, positive curvature of the ambient space increases rigidity and a weaker pinching condition suffices to characterize the shrinking round solutions.

Section 3 is devoted to flows with nonlinear speed in codimension one. The evolution has the form
\begin{equation}\label{flowh}
\frac{\partial \varphi}{\partial t}(x,t) = -F(W(x,t))\nu(x,t) = -f(\lambda(W(x,t)))\nu(x,t),
\end{equation}
where the speed function $f$ is a positive symmetric homogeneous function of the eigenvalues of the Weingarten operator $W$, with some additional properties to be specified later. Again, under these flows a sphere contracts homothetically to a point. In the literature on finite-time singularities, these flows have been studied mainly in the case where the speed is homogeneous of degree one, which has better analytical properties. We point out that the analogue of Theorem \ref{Hui-Sin} cannot hold in full generality when the homogeneity degree is less than one. In fact if $f=K^{\frac{1}{n+2}}$, with $K$ the Gauss curvature, the flow is invariant under affine transformations \cite{A2b} and so there exist pinched ancient solutions given by homothetically shrinking ellipsoids.
 
In \S 3.1 we prove an estimate for ancient solutions satisfying a uniform curvature pinching condition of the form $\lambda_1 \geq \epsilon H$, as in Theorem \ref{Hui-Sin}. The estimate holds for speeds with general homogeneity degree and shows that the diameter growth and the curvature decay of the solution as $t \to +\infty$ have the same rate as in the case of the shrinking sphere. In \S 3.2 we consider speeds with homogeneity one that are either convex or concave, and we use the estimate of the previous section to prove that uniform pinching characterizes the shrinking sphere. In this way, we provide an alternative argument for some results in \cite{HS,LL}. In the remainder of the paper, we consider speeds with general homogeneity. In \S 3.3 we consider the flow by powers of the Gauss curvature, a widely studied flow which enjoys important monotonicity properties in terms of suitably defined entropies. Using the special features of this flow, we are able to prove that the spheres are characterized by the same uniform pinching condition as in the homogeneity one case. Finally, in \S 3.4, we examine solutions of flows for a general class of speeds with higher homogeneity. Inspired by the work of Andrews and McCoy \cite{AMC}, we obtain a rigidity result for the sphere by assuming a stronger pinching condition of the form $|h|^2 \leq (\frac 1n + \epsilon) |H|^2$, with $\epsilon$ suitably small. \medskip

{\bf Note:} After this research was completed, a related preprint by Lynch and Nguyen \cite{LN} has appeared, where the authors obtain independently similar results to those of \S 2 of our paper, using some interesting different arguments. In particular, they prove a lemma on the compactness of submanifolds with curvature pinching which allows them to characterize the shrinking sphere in Euclidean space by rescaling techniques.

\section{High codimension Mean Curvature Flow}

We begin by introducing some notation. Throughout the paper, \linebreak \hbox{$\varphi: M^n \times (-\infty,0) \rightarrow N^{n+k}$} will be a time dependent immersion of a closed, smooth, $n$-dimensional manifold $M$ into an $(n+k)$-dimensional smooth Riemannian manifold $N$. 
For any fixed time $t$, we consider the usual geometric quantities, such as the metric $g$ induced by the immersion, the second fundamental form $h$, the Weingarten operator $W$ and the volume element $d\mu$. Quantities defined on the submanifold will be denoted by latin indices and quantities defined on the ambient space by greek indices; we will also identify $M$ with $\varphi(M)$. With the choice of a basis $\left\{e_i\right\}_{i=1}^n$ for the tangent space and one $\left\{\nu_\alpha\right\}_{\alpha=1}^k$ for the normal space, the expression of $h$ in coordinates is given by $h(e_i,e_j)=h_{ij\alpha}\nu_{\alpha}$ (we will always use the summation convention on repeated indices). The mean curvature vector $\textbf{H}$ is the trace of the second fundamental form with respect to $g$, so $\textbf{H}= \textbf{H}_{\alpha}\nu_{\alpha}= g^{ij}h_{ij\alpha}\nu_{\alpha}$; we will denote the squared norm of $h$ as $|h|^2$. It is easy to see that $|h|^2 \geq |\textbf{H}|^2/n$, with equality only at umbilical points, i.e. those points where, for a suitable choice of the basis, we have $h_{ij\alpha} = \lambda \delta_{ij}$ if $\alpha=k+1$ and $h_{ij\alpha} \equiv 0$ otherwise, for some constant $\lambda$.

The codimension $k$ will be assumed greater than one in this section, while Section 3 will consider problems with $k=1$.
When the codimension is one, there is only one normal direction, and we will choose $\nu$ as the outer normal on closed embedded hypersurfaces. The mean curvature vector is given by $\textbf{H}=-H\nu$, with $H$ a smooth real-valued function on $M$. 
There is a single Weingarten operator; its eigenvalues are the principal curvatures and will be denoted by $\lambda_1 \leq \dots \leq \lambda_n$. We say that the immersion is convex (respectively, strictly convex) if $k=1$ and $\lambda_i \geq 0$ ($\lambda_i > 0$) for all $i$; if the submanifold is embedded, this definition coincides with the usual one, so $M$ is the boundary of a convex body $\Omega$.

For notational simplicity, in this section we will write $H$ instead of $\textbf{H}$ for the mean curvature vector. This is different from our notation for hypersurfaces in Section 3, where $H$ is a scalar function; however, since the computations of the two cases are independent, there will be no possibility of confusion. In addition, throughout the paper, constants (e.g. $c$) might vary from one formula to another.

\subsection{Flow in Euclidean space}

In this subsection we prove the following:

\begin{thm}\label{teo1}

Let $M_t = \varphi(M,t)$ be a closed ancient solution of \eqref{MCFH} in $\mathbb{R}^{n+k}$, with $n,k \geq 2$. Suppose that, for all $t \in (-\infty,0)$ we have $|H|^2 > 0$ and $|h|^2 \leq C_0 |H|^2$, with $C_0$ a constant satisfying
\begin{equation} \label{pinch}
C_0 < \left\{
\begin{aligned} 
		&\frac{1}{n-1} \quad\quad\, &&n \geq 4\\[5pt]
		&\frac{4}{3n} \quad\quad\, &&n= 2,3.
\end{aligned}
\right.
\end{equation}

Suppose furthermore that the norm of the second fundamental form is uniformly bounded away from the singularity, so there exists  $h_0>0$ such that $|h|^2\leq h_0$ in $(-\infty, -1)$. Then $M_t$ is a family of shrinking spheres. 
\end{thm}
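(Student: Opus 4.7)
My plan is to adapt the strategy of Huisken--Sinestrari \cite{HS} to the high codimension setting by replacing the codimension-one pinching estimates with those of Andrews--Baker \cite{AB}. Recall that \cite{AB} show (i) the cone $|h|^2 \leq C_0|H|^2$ with $C_0$ satisfying \eqref{pinch} is preserved under \eqref{MCFH}, and (ii) there is an improved pinching of the form
\begin{equation*}
|h|^2 - \tfrac{1}{n}|H|^2 \leq C_\eta \, |H|^{2-\eta}
\end{equation*}
for some $\eta>0$ and constant $C_\eta$ depending only on the initial data through the gap in the pinching. The overall goal is to combine this improvement with matching upper and lower bounds on $|H|$ of order $1/\sqrt{-t}$, so that the traceless part of $h$ becomes negligible as $t\to-\infty$ after rescaling, forcing the solution to be umbilical.

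The first technical step is to upgrade the Andrews--Baker improvement to hold with constants that are \emph{independent} of the time origin. In the ancient setting, the assumption $|h|^2 \leq h_0$ for $t<-1$ provides a uniform curvature ceiling, and by running the flow from any slice $M_{t_0}$ with $t_0\to-\infty$ one can check that the iteration behind the improved pinching stabilises, yielding the inequality on all of $(-\infty,-1)$ with a fixed $C_\eta$. The second step is the quantitative control $c_1/\sqrt{-t} \leq |H|_{\min}(t) \leq |H|_{\max}(t) \leq c_2/\sqrt{-t}$ for $-t$ large: the upper bounds follow by applying the maximum principle to $|H|^2$ at its spatial minimum and integrating the ODE $(|H|_{\min}^{-2})' \leq -2/n$ backwards from $t=-1$, while the lower bounds use the pinching $|H|^2 \geq |h|^2/C_0$ together with a similar ODE argument on $|H|_{\max}^{-2}$, both echoing the HS scheme but with scalar quantities replaced by norms of normal vectors. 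Combining these curvature bounds with the improved pinching gives $(|h|^2-\tfrac{1}{n}|H|^2)/|H|^2 \leq C|H|^{-\eta} \to 0$ uniformly as $t\to-\infty$.

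The third step, which I expect to be the main obstacle, is controlling the intrinsic diameter of $M_t$. In codimension one this is immediate from convexity and an extrinsic ball containing $M_t$; in higher codimension one must use Hamilton's argument from \cite{H2}, integrating bounds on $|H|$ along curves and exploiting the preserved pinching to compare intrinsic and extrinsic distances, ultimately obtaining $\mathrm{diam}_{g(t)}(M) \leq c\sqrt{-t}$. Given these bounds, the parabolic rescaling $\tilde{M}_\tau = \frac{1}{\sqrt{-t}}M_t$ with $\tau = -\log(-t)$ is contained in a fixed ball, has uniformly bounded geometry, and becomes umbilical in the limit $\tau\to-\infty$ by the improved pinching; standard compactness then produces a smooth limit which is a round $n$-sphere.

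Finally, to pass from ``limit is a sphere'' back to ``$M_t$ is a shrinking sphere for all $t$,'' I would argue that the rescaled flow converges smoothly to the stationary shrinking sphere and then invoke the rigidity of the sphere as a shrinker, using the uniform pinching to upgrade convergence to equality at each fixed time. Alternatively, one can use the fact that equality in the Andrews--Baker pinching inequality forces umbilicality pointwise at every sufficiently negative time, and then a connectedness-in-time argument propagates this to all $t\in(-\infty,0)$. The delicate aspects will be the uniform bounds on the improved pinching constant in the ancient regime and the Hamilton-type diameter estimate, which requires careful handling of the normal bundle geometry absent in the hypersurface case.
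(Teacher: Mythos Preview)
Your outline correctly identifies the two key ingredients (the Andrews--Baker integral estimates and a Hamilton-type intrinsic diameter bound), but the way you combine them has real gaps, and the paper's argument is organised differently precisely to avoid them.

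\textbf{Where the proposal breaks.} Your steps 1 and 2 are the problem. In step~1 you want the improved pinching $|h|^2-\tfrac1n|H|^2\le C_\eta|H|^{2-\eta}$ with a \emph{uniform} $C_\eta$ on the whole ancient interval. But in \cite{AB} the constant $C_\eta$ comes out of a Stampacchia iteration and depends on the initial slice through its volume and Michael--Simon/Sobolev constant; these are exactly the quantities you do not yet control as $t_0\to-\infty$, so ``the iteration stabilises'' is circular. In step~2 the ODE manipulations you describe only produce \emph{upper} bounds: from $\partial_t|H|^2_{\min}\ge \tfrac{2}{n}|H|^4_{\min}$ one gets $|H|_{\min}(t)\le c/\sqrt{-t}$, and similarly at the maximum one gets $|H|_{\max}(t)\le c/\sqrt{-t}$. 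The crucial lower bound $|H|_{\min}(t)\ge c_1/\sqrt{-t}$, which you need to make the rescaled flow uniformly umbilical, does not follow from any such ODE; it would require a Harnack-type comparison between $\min$ and $\max$, which is not available here. Without it your rescaling/compactness step~4 has no content. Also, the diameter bound you anticipate, $\mathrm{diam}\le c\sqrt{-t}$, is sharper than what the Hamilton argument actually yields from a \emph{uniform} curvature bound $|h|^2\le h_0$: that argument gives only $\mathrm{diam}\le C|t|$.

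\textbf{What the paper does instead.} The paper never passes through a pointwise improved pinching or a two-sided $|H|\sim 1/\sqrt{-t}$ bound, and never rescales. It works directly with the $L^p$ quantity $\psi(t)=\int_{M_t} f_\sigma^p\,d\mu_t$ for $f_\sigma=(|h|^2-\tfrac1n|H|^2)/|H|^{2(1-\sigma)}$. Propositions~12 and~13 of \cite{AB} (whose constants depend only on $n$ and $C_0$, not on the initial slice) give, for suitable $p,\sigma$,
\[
\frac{d}{dt}\,\psi \;\le\; -\,2p\sigma\,\psi^{1+\frac{1}{\sigma p}}\,|M_t|^{-\frac{1}{\sigma p}}.
\]
Thus the only geometric input needed is a \emph{crude} volume bound $|M_t|\le C|t|^n$. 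This is obtained via Chen's sectional-curvature lower bound (so $\mathrm{Ric}\ge 0$), Bishop--Gromov, and the Hamilton distance estimate from \cite{H2}, which under the uniform bound $|h|^2\le h_0$ yields the \emph{linear} growth $\mathrm{diam}(M_t)\le C|t|$. Feeding this into the differential inequality and integrating from $T_0$ to $t$ with $\sigma p>n$ forces $\psi\equiv 0$ once $T_0\to-\infty$, hence $f_\sigma\equiv 0$ and $M_t$ is totally umbilical. No lower bound on $|H|$, no sharp diameter estimate, and no rescaling limit are needed.
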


In \cite{AB}, Andrews and Baker have proved that the pinching inequality \eqref{pinch} is invariant under the flow, and that submanifolds satisfying this condition at an initial time $T_0$ evolve into a ``round point'' in finite time. We recall the evolution equations for the relevant geometric quantities derived in that paper:
\begin{align}
\frac{\partial g_{ij}}{\partial t}&=-2H\cdot h_{ij} \label{mcfhmet}\\[5pt]
\frac{\partial H}{\partial t} &= \Delta H + H \cdot h_{pq}h_{pq}\\[5pt]
\frac{\partial |h|^2}{\partial t} &= \Delta |h|^2-2|\nabla h|^2 +2R_1\\[5pt]
\frac{\partial |H|^2}{\partial t}&=\Delta |H|^2-2|\nabla H|^2+2R_2
\end{align}
with
\begin{align}
&R_1 = \sum\limits_{\alpha,\beta}\left(\sum\limits_{i,j}h_{ij\alpha}h_{ij\beta}\right)^2+\sum\limits_{i,j,\alpha,\beta}\left(\sum\limits_p h_{ip\alpha}h_{jp\beta}-h_{jp\alpha}h_{ip\beta}\right)^2 \label{erreuno}\\[5pt]
&R_2 = \sum\limits_{i,j}\left(\sum\limits_{\alpha}H_{\alpha}h_{ij\alpha}\right)^2.\label{erredue}
\end{align}

As in \cite{AB,HS}, for fixed small $\sigma>0$ we consider the function

\begin{equation}
f_\sigma = \frac{|h|^2 - \frac{1}{n} |H|^2}{|H|^{2(1-\sigma)}},
\end{equation}
and we observe that, for any $\sigma$, $f_{\sigma}$ vanishes at $x \in M$ if and only if $x$ is an umbilical point. Therefore, if $f_\sigma=0$ everywhere on $M_t$ for some $t$, then $M_t$ is a totally umbilical submanifold, hence an $n$-dimensional sphere in $\mathbb{R}^{n+k}$. As spheres evolve by homothetic shrinking, $f_{\sigma}$  will remain zero for all subsequent times. Thus, to obtain Theorem \ref{teo1}, it is enough to show that $f$ is identically zero on some time interval $(-\infty, T_1]$, with $T_1 < 0$. To this purpose, we prove the following estimate.

\begin{prop}\label{pro1}
Under the hypotheses of Theorem \ref{teo1}, there are constants $\alpha,\beta>0$ depending only on $n, C_0$ and $C=C(C_0,n,h_0) >0$ such that, for all $[T_0, T_1] \subset (-\infty, -1)$ and for all $p > \alpha$, $\sigma \leq\frac{\beta}{\sqrt{p}}$, $\sigma p > n$, we have
\begin{equation}\label{interior}
\left(\int_{M_t} f_\sigma^p\right)^{\frac{1}{\sigma p}} \leq \frac{C}{|T_0|^{1-\frac{n}{\sigma p}}-|t|^{1-\frac{n}{\sigma p}}}\quad\quad \forall t \in (T_0,T_1].
\end{equation}
\end{prop}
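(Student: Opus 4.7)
The plan is to establish a differential inequality for $y(t) := \int_{M_t} f_\sigma^p\, d\mu_t$ and to integrate it. The starting point is a pointwise evolution inequality for $f_\sigma$. Using the evolution equations for $|h|^2$ and $|H|^2$ stated above, together with the explicit reaction identities \eqref{erreuno}--\eqref{erredue}, a direct computation along the lines of \cite{AB} should yield
$$\frac{\partial f_\sigma}{\partial t} \le \Delta f_\sigma + \frac{2(1-\sigma)}{|H|^2}\langle \nabla |H|^2, \nabla f_\sigma\rangle - \epsilon_0\frac{|\nabla h|^2}{|H|^{2(1-\sigma)}} + 2\sigma f_\sigma |H|^2,$$
where $\epsilon_0 > 0$ depends only on $n$ and $C_0$. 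The precise numerical thresholds in \eqref{pinch} are exactly what makes the reaction terms generate this useful negative gradient contribution.

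Next, I multiply by $p f_\sigma^{p-1}$ and integrate over $M_t$, using $\partial_t d\mu_t = -|H|^2 d\mu_t$. Integration by parts produces a negative quadratic-gradient term together with a mixed-gradient term; the latter is absorbed via Young's inequality, at the cost of the requirement $\sigma \le \beta/\sqrt{p}$ (since the absorption constant degrades like $\sqrt{p}$). The outcome is a differential inequality of the schematic form
$$y'(t) + c \int_{M_t} |\nabla f_\sigma^{p/2}|^2\, d\mu_t \le C\sigma p \int_{M_t} f_\sigma^p |H|^2\, d\mu_t.$$

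To close the estimate I invoke the Michael-Simon Sobolev inequality applied to $u = f_\sigma^{p/2}$, which gives
$$\left(\int_{M_t} f_\sigma^{pn/(n-2)}\right)^{(n-2)/n} \le C_S \int_{M_t} \bigl(|\nabla f_\sigma^{p/2}|^2 + |H|^2 f_\sigma^p\bigr).$$
Combining this with H\"older's inequality and the pointwise bound $f_\sigma \le C(C_0,h_0)$ on $(-\infty,-1)$, which is a consequence of the pinching and of $|h|^2 \le h_0$, one arrives at an ODE inequality for $y$ of the form $y'(t) \le -\rho(t)\, y(t)^{1+1/(\sigma p)}$ whose coefficient $\rho$ has primitive comparable to $|t|^{1 - n/(\sigma p)}$. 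Rewriting this as $(y^{-1/(\sigma p)})' \ge c|t|^{-n/(\sigma p)}$, integrating from $T_0$ to $t$, and discarding the nonnegative boundary contribution at $T_0$ produces exactly \eqref{interior}. The principal obstacle throughout is the control of $\int f_\sigma^p |H|^2$: umbilic points prevent any pointwise upper bound for $|H|^2$ in terms of a power of $f_\sigma$, so the Sobolev inequality is indispensable and must be balanced delicately against the smallness of $\sigma$ and the uniform second-fundamental-form bound on $(-\infty,-1)$.
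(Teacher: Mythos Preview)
Your sketch has two substantive gaps, both at the heart of the argument.

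\textbf{The sign of the $|H|^2$ term.} Your ``schematic'' inequality
\[
y'(t) + c \int_{M_t} |\nabla f_\sigma^{p/2}|^2 \le C\sigma p \int_{M_t} f_\sigma^p |H|^2
\]
has the $|H|^2$ term with a \emph{positive} coefficient on the right. The Michael--Simon inequality cannot flip this sign: it only gives a \emph{lower} bound for $\int |\nabla u|^2 + \int |H|^2 u^2$, so plugging it in makes the bad term worse, not better. If you instead try to bound $\int f_\sigma^p |H|^2$ from above using $|H|^2 \le n h_0$ and interpolate via H\"older, you obtain at best a linear inequality $y' \le [C' - c'|M_t|^{-2/n}]\,y$, and since $|M_t|$ grows as $t\to -\infty$ this gives nothing. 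What is actually needed is the Poincar\'e-type estimate from \cite{AB} (their Proposition~12), which uses the contracted Simons identity to bound $\int |H|^2 f_\sigma^p$ by the two gradient integrals already present in \eqref{eqfsigma}. After absorption one obtains
\[
\frac{d}{dt}\int_{M_t} f_\sigma^p \le -2p\sigma \int_{M_t} |H|^2 f_\sigma^p,
\]
with the crucial negative sign. The constraint $\sigma \le \beta/\sqrt p$ comes from this absorption, not from a Young-inequality estimate of the cross gradient term. Once the sign is right, the pinching gives $|H|^2 \ge c\, f_\sigma^{1/\sigma}$ (note the direction: $f_\sigma \le (C_0-\tfrac1n)|H|^{2\sigma}$), and H\"older yields $y' \le -c\,|M_t|^{-1/(\sigma p)} y^{1+1/(\sigma p)}$.

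\textbf{The coefficient $\rho(t)$.} You assert that $\rho$ has primitive comparable to $|t|^{1-n/(\sigma p)}$, but give no mechanism producing the exponent $n$. This is equivalent to the volume bound $|M_t|\le C|t|^n$, and it is the main new ingredient of the proof in higher codimension: one first uses Chen's inequality to deduce nonnegative sectional (hence Ricci) curvature from the pinching, then Bishop--Gromov gives $|M_t|\le C\,d_t^{\,n}$ with $d_t$ the intrinsic diameter, and finally a Hamilton-type estimate on the time derivative of distances (integrating Ricci along geodesics and using the hypothesis $|h|^2\le h_0$ on $(-\infty,-1)$) gives $d_t \le C|t|$. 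None of this is visible in your outline, and without it the ODE step cannot be completed.
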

The proposition immediately implies Theorem \ref{teo1}. Indeed, sending $T_0$ to $-\infty$ in \eqref{interior}, we obtain that $f_{\sigma}^p$ is zero for every $t<T_1$ for suitable values of $\sigma$ and $p$. $M_t$ is then a family of shrinking spheres.

We now prove Proposition \ref{pro1}.
\begin{proof}
The first part of the proof follows the strategy of \cite{HS} together with the estimates of \cite{AB}. If we set
$$
\epsilon_{\nabla} = \frac{3}{n+2} - C_0,
$$
where $C_0$ is the constant in our pinching assumption \eqref{pinch}, then $\epsilon_\nabla$ is positive and Proposition 13 in \cite{AB} states that
\begin{align}\label{eqfsigma}
\frac{d}{dt} \int_{M_t} f_\sigma^p\, d\mu_t \,\leq\,&- \frac{p(p-1)}{2}\int_{M_t} f_\sigma^{p-2}|\nabla f_\sigma|^2\, d \mu_t  \\[7pt]
&- p \epsilon_\nabla \int_{M_t}\frac{f_\sigma^{p-1}}{|H|^{2(1-\sigma)}}|\nabla H|^2\, d\mu_t + 2 p \sigma \int_{M_t} |H|^2 f_\sigma^p \,d\mu_t \nonumber
\end{align}
for any $p \geq \max \left\{ 2, \frac{8}{\epsilon_\nabla + 1} \right\}$.

In addition, Proposition 12 of the same paper shows that there exists a constant $\epsilon_0$ depending only on $C_0$ and $n$ such that
\begin{align*}
\int_{M_t} |H|^2 f_\sigma^p\, d\mu_t \leq &\,\frac{p \eta + 4}{\epsilon_0}\int_{M_t} \frac{f_\sigma^{p-1}}{|H|^{2(1-\sigma)}}|\nabla H|^2\, d\mu_t\\[7pt]
& + \frac{p-1}{\epsilon_0 \eta}\int_{M_t}f_\sigma^{p-2}|\nabla f_\sigma|^2\, d\mu_t
\end{align*}
for all $p \geq2$, $\eta > 0$. If we fix $\eta = \frac{8 \sigma}{\epsilon_0}$ and we take any $p,\sigma$ such that $p > \frac{16}{\epsilon_\nabla}$, $\sigma \leq\frac{\epsilon_0}{8}\sqrt{\frac{\epsilon_\nabla}{p}}$, we obtain
\begin{align*}
4p\sigma \int_{M_t} |H|^2 f_\sigma^p\, d\mu_t \leq &\,\left( \frac{32 \sigma^2 p}{\epsilon_0^2}  + \frac{16 \sigma}{\epsilon_0} \right) p \int_{M_t} \frac{f_\sigma^{p-1}}{|H|^{2(1-\sigma)}}|\nabla H|^2\, d\mu_t\\[7pt]
& + \frac{p(p-1)}{2}\int_{M_t}f_\sigma^{p-2}|\nabla f_\sigma|^2\, d\mu_t \\[7pt]
\leq &\,\left( \frac{\epsilon_\nabla}{2}  + \frac{\epsilon_\nabla}{2} \right) p \int_{M_t} \frac{f_\sigma^{p-1}}{|H|^{2(1-\sigma)}}|\nabla H|^2\, d\mu_t\\[7pt]
& + \frac{p(p-1)}{2}\int_{M_t}f_\sigma^{p-2}|\nabla f_\sigma|^2\, d\mu_t,
\end{align*}
so that \eqref{eqfsigma} implies
\begin{equation}\label{psigma}
\frac{d}{dt} \int_{M_t} f_\sigma^p\, d\mu_t\, \leq \, -2p\sigma \int_{M_t} |H|^2 f_\sigma^p\, d\mu_t
\end{equation}
for all $p > \frac{16}{\epsilon_\nabla}$, $\sigma \leq\frac{\epsilon_0}{8}\sqrt{\frac{\epsilon_\nabla}{p}}$.

Thanks to the definition and our pinching assumption, we have
\begin{equation*}
0\leq f_\sigma \leq (C_0 - \frac{1}{n})|H|^{2\sigma} \leq |H|^{2\sigma}\qquad
\end{equation*}
so we obtain
\begin{align}\label{volest}
\frac{d}{dt} \int_{M_t} f_\sigma^p\, d\mu_t 
&\leq -2p\sigma \int_{M_t} f_\sigma^{p+\frac{1}{\sigma}} d \mu_t \notag \\[7pt]
&\leq -2p\sigma  \left( \int_{M_t} f_\sigma^p d\mu_t \right)^{1+\frac{1}{\sigma p}} \cdot |M_t|^{-\frac{1}{\sigma p}}
\end{align}
using H{\"o}lder's inequality, where $|M_t|$ is the volume of $M_t$. 
\begin{cla*}
There exists a constant $C=C(n,h_0)$ such that
\begin{equation}\label{claimvol}
|M_t| \leq C|t|^n, \mbox{ for all }t \leq -1.
\end{equation}
\end{cla*}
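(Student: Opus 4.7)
The strategy is to bound $|M_t|$ by combining a speed-bound argument (from the hypothesis $|h|^2\le h_0$) with a volume--diameter estimate that exploits the pinching.

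The first step is purely kinematic. Since $|\mathbf{H}|^2\le n|h|^2\le nh_0$ on $(-\infty,-1)$, the flow velocity satisfies $|\partial_t\varphi|=|\mathbf{H}|\le\sqrt{nh_0}$. Integrating $\partial_t\varphi=\mathbf{H}$ over $[t,-1]$ at each fixed $x\in M$ gives $|\varphi(x,t)-\varphi(x,-1)|\le\sqrt{nh_0}\,(|t|-1)$, and hence the extrinsic diameter of $M_t$ grows at most linearly in $|t|$:
\[
\mathrm{diam}(M_t)\le\mathrm{diam}(M_{-1})+2\sqrt{nh_0}(|t|-1)\le C(1+|t|),\qquad t\le-1,
\]
for a constant $C$ depending on the given data.

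The second step is to convert this into a volume bound of the form $|M_t|\le C\,\mathrm{diam}(M_t)^n$. Under the Andrews--Baker pinching \eqref{pinch}, the results of \cite{AB} imply that $M_t$ is diffeomorphic to $S^n$ and geometrically controlled, so such an estimate should hold with $C=C(n,h_0,C_0)$. In the codim-$1$ convex case this is immediate, since $M$ bounds a convex body whose surface area is dominated by a power of the diameter. In higher codimension one has to work harder: either realize $M_t$ as a graph over a round sphere with slope controlled by the pinching, or apply a Bishop--Gromov-type comparison on $M_t$ using the curvature bound $|h|^2\le h_0$. This is in essence the Hamilton intrinsic-diameter technique from \cite{H2} alluded to in the introduction. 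Combining with the previous linear-in-$|t|$ diameter estimate yields $|M_t|\le C|t|^n$ for $t\le -1$.

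The main obstacle is clearly the second step, i.e.\ the passage from an extrinsic-diameter bound to a volume bound in codimension $\ge 2$. To see that the pinching is genuinely needed here, note that the direct Gronwall argument from
\[
\tfrac{d}{dt}|M_t|=-\int_{M_t}|\mathbf{H}|^2\,d\mu_t\,\ge\,-nh_0\,|M_t|
\]
only yields the exponential bound $|M_t|\le |M_{-1}|e^{nh_0(|t|-1)}$, which is far too weak; the improvement to the polynomial bound $|t|^n$ forces the pinching condition to enter in an essential way via the intrinsic-geometric control of $M_t$.
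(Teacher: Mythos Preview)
Your first step is correct and pleasantly simple: the uniform bound $|\mathbf{H}|\le\sqrt{nh_0}$ immediately gives a linear-in-$|t|$ bound on the \emph{extrinsic} diameter. The gap is entirely in the second step, and it is a real one. Bishop--Gromov compares volume to the $n$-th power of the \emph{intrinsic} diameter $d_t$, not the extrinsic one, and you have only bounded the latter. In codimension $\ge 2$ there is no general inequality $d_t\le C\,\mathrm{diam}_{\mathrm{ext}}(M_t)$ under the hypotheses at hand: bounded $|h|$ together with nonnegative Ricci does not by itself rule out long spiraling, so the intrinsic diameter could in principle be much larger. Neither of the two mechanisms you sketch --- writing $M_t$ as a controlled spherical graph, or a ``Bishop--Gromov-type comparison using $|h|^2\le h_0$'' --- is actually carried out, and the second one as stated is off target (Bishop--Gromov needs a lower Ricci bound, which comes from the pinching via Chen's inequality, not from $|h|^2\le h_0$). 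You correctly point to Hamilton's technique from \cite{H2}, but that technique does something different from what you describe: it bounds the intrinsic diameter directly, not the passage from extrinsic diameter to volume.

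The paper's argument goes as follows. By Chen's inequality, the pinching \eqref{pinch} forces nonnegative sectional (hence Ricci) curvature, so Bishop--Gromov gives $|M_t|\le\theta\, d_t^n$. To bound $d_t$, one differentiates the intrinsic distance $d(P,Q,t)$ using the evolution of the metric, obtaining an integral of $H^\alpha h_{ij\alpha}\gamma'^i\gamma'^j$ along a minimizing geodesic $\gamma$; this integrand is controlled by $\mathrm{Ric}(\gamma',\gamma')$, again thanks to Chen. Hamilton's estimate \cite[Thm.~17.4]{H2} then bounds $\int_v^{L-v}\mathrm{Ric}(\gamma',\gamma')\,ds$ by $2(n-1)/v$, independently of the length $L$, while the bounded-$|h|$ hypothesis handles the two end-segments of length $v$. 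This yields $-\tfrac{d}{dt}d(P,Q,t)\le C'$, hence linear growth of $d_t$, and the claim follows. The crucial point is precisely the length-independent control on $\int_\gamma\mathrm{Ric}$, which is what allows one to bound the intrinsic diameter rather than only the extrinsic one; this is the ingredient your outline is missing.
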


Once the claim is proved, the statement of the proposition follows easily. In fact, setting $\psi(t) = \int_{M_t} f_\sigma^p d\mu_t$, and using \eqref{volest}, we have
\begin{equation*}
\frac{d}{dt} \psi^{-\frac{1}{\sigma p}} =-\frac{1}{\sigma p} \psi^{-\left(\frac{1}{\sigma p} +1\right)} \frac{d}{dt} \psi \geq C(|t|)^{-\frac{n}{\sigma p}}.
\end{equation*}
As $\psi(t)\neq 0$ implies $\psi(s)\neq 0$ for $s<t$, we obtain, integrating on a time interval $(T_0,t]$, with $t\leq T_1$,
\begin{align*}
\psi^{-\frac{1}{\sigma p}}(t) &\geq \psi^{-\frac{1}{\sigma p}}(T_0)+C \int_{|t|}^{|T_0|} \tau^{-\frac{n}{\sigma p}} d\tau > C \int_{|t|}^{|T_0|} \tau^{-\frac{n}{\sigma p}}d\tau \\[5pt]
&> C \left(|T_0|^{1-\frac{n}{\sigma p}}-|t|^{1-\frac{n}{\sigma p}}\right)\label{integreq} 
\end{align*}
as $\sigma p > n$.

It remains to prove claim \eqref{claimvol}. For this part, we cannot adapt the technique of \cite{HS}, which only applies to hypersurfaces, and we use a different argument.

We first recall a result by Chen \cite{C}, which gives a lower bound for the minimum of sectional curvatures $K_{\pi}(p)$ under our pinching assumption:
\[
\min_{\pi \subset Gr(2,T_p M)} K_{\pi}(p) \geq \frac{1}{2}\left(\frac{1}{n-1}-C_0\right)|H|^2(p). \qquad\qquad \label{chen}
\]
In particular, this implies that all the evolving submanifolds have nonnegative Ricci curvature, so by Bishop-Gromov's theorem we can bound the volume of balls of arbitrary radius in $M_t$ with the volume of balls in $\mathbb{R}^n$. In particular, if $d_t$ is the intrinsic diameter of $M_t$, we have
\begin{equation} \label{volspher}
|M_t| \leq Vol(B_{d_t}) = \theta d_t^n,
\end{equation}
where $\theta$ is a constant only depending on $C_0$ and $n$.

To bound the intrinsic diameter, we estimate the change of the distance between two points during the evolution, using a technique inspired by the one of \cite[\S 17]{H} for the Ricci flow.

Let $P$ and $Q$ be two fixed points in $M$ and let $\gamma$ be a curve from $P$ and $Q$. The length of $\gamma$ varies with time along the flow, and we denote it by $L[\gamma_t]$. The evolution equation \eqref{mcfhmet} for the metric gives
\begin{equation*}
\frac{d}{dt} L[\gamma_t] = - \int_{\gamma} H^\alpha h_{ij\alpha} \gamma^{'i}\gamma^{'j} ds,
\end{equation*}
where $\gamma$ is parametrized by arclength at time $t$ and $\gamma'$ is the tangent vector.
By the same proof as in \cite[Lemma 17.3]{H}, we can estimate the time derivative of the distance from $P$ and $Q$ as follows. Let $\Gamma$ denote the set of all distance minimizing geodesics between $P$ and $Q$ at time $t$. Then

\begin{equation}\label{estdist}
-\sup_{\gamma \in \Gamma} \int_{\gamma_t} H^\alpha h_{ij\alpha}\gamma^{'i}\gamma^{'j} ds \leq \frac{d}{dt} d(P,Q,t) \leq -\inf_{\gamma \in \Gamma} \int_{\gamma_t} H^\alpha h_{ij\alpha}\gamma^{'i}\gamma^{'j} ds.
\end{equation}
We have
$|H^\alpha h_{ij\alpha}\gamma^{'i}\gamma^{'j}| \leq |H||h| \leq C_0|H|^2$.
As estimate \eqref{chen} implies, for the Ricci curvature,
\begin{equation*}
Ric(\gamma',\gamma') \geq (n-1)\frac{1}{2}\left(\frac{1}{n-1}-C_0\right)|H|^2,
\end{equation*}
we also have
\begin{equation*}
H^\alpha h_{ij\alpha}\gamma^{'i}\gamma^{'j} \leq C Ric(\gamma',\gamma')
\end{equation*}
along $\gamma$, with $C$ a constant depending only on $C_0$ and $n$.

We now recall Theorem 17.4 from \cite{H2}, which states the following. If the Ricci curvature of a $n$-dimensional manifold is nonnegative, for any curve $\gamma$ of length $L$, for any $v \in (0, \frac{L}{2})$,
\begin{equation*}
\int_v^{L-v} Ric(\gamma',\gamma') \, ds \leq \frac{2(n-1)}{v}.
\end{equation*}
We observe that the norm of the Ricci tensor of the submanifold is uniformly bounded away from the singular time, as a consequence of the boundedness of the second fundamental form and the Gauss equations. Thus, there is $R>0$ such that $|Ric(p,t)| \leq R$ for $t \leq -1$. Then \eqref{estdist} implies
\begin{align*}
 - \frac{d}{dt} d(P,Q,t) & \leq  \int_0^v CR \, ds + \frac{2(n-1)}{v} + \int_{L-v}^L CR \, ds \\
& \leq  2C R v + \frac{2(n-1)}{v}.
\end{align*}

Since our aim is to estimate the diameter of $M_t$ for large negative times, it is not restrictive to assume that $d(P,Q,t)>2$. Then we can choose $v=1$ to obtain 
\begin{equation*}
- \frac{d}{dt}\,d(P,Q,t)\, \leq  2C R + 2(n-1) =: C'
\end{equation*}
and integrating on $[t,-1]$ for an arbitrary $t <-1$  we get
\begin{equation*}
d(P,Q,t)\leq d(P,Q,-1) + C'(|t|-1)
\end{equation*}
which implies
\begin{equation}
d_t \leq d_{-1} + C' (|t|-1) \leq C'' |t| \quad\quad \forall t \leq -1.
\end{equation}
Using \eqref{volspher}, we conclude that $|M_t| \leq C''|t|^n$, proving our claim \eqref{claimvol} and the proposition.
\end{proof}

\subsection{High codimension Mean Curvature Flow in the sphere}

We want to characterize the same phenomenon on spheres $\mathbb{S}_K^{n+k}$ of constant sectional curvature $K$, showing that any ancient solution of the Mean Curvature Flow that satisfies a uniform pinching condition on the sphere is a family of shrinking totally umbilical submanifolds; we will refer to them with the term ``spherical caps''. We also prove that we can relax the condition to admit points with $|H|=0$ and the ancient solution will be either a spherical cap or a totally geodesic submanifold.

\begin{thm}

Let $M^n_t$ be a closed ancient solution of \eqref{MCFH} in $\mathbb{S}^{n+k}_K$. 
\begin{enumerate}
	\item
	If, for all $t \in (-\infty,0)$, 
 $0<|h|^2 \leq \frac{4}{3n} |H|^2$ holds,
then $M_t$ is a shrinking spherical cap.
\item If, for all $t \in (-\infty,0)$,  we have $|h|^2 \leq \alpha|H|^2 + \beta K$, with
\begin{equation*}
\left\{
\begin{aligned} 
		& \alpha= \frac{1}{n-1} \quad\, && \beta = 2 &&n \geq 4\\[5pt]
		& \alpha= \frac{4}{9} \quad\, && \beta=\frac{3}{2} \quad \quad&&n=3 \\[5pt]
		& \alpha= \frac{2}{4-\beta} && \beta <\frac{12}{13} &&n=2,
\end{aligned}
\right.
\end{equation*}

 then $M_t$ is either a shrinking spherical cap or a totally geodesic submanifold.
\end{enumerate}
\end{thm}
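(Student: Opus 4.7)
The plan is to mirror the proof of Theorem \ref{teo1}, adapting each step to the spherical ambient $\mathbb{S}^{n+k}_K$ and absorbing the additional terms that the ambient curvature $K$ contributes to the evolution equations. For Part 1 I would again set
\[
f_\sigma \;=\; \frac{|h|^2 - \tfrac{1}{n}|H|^2}{|H|^{2(1-\sigma)}},
\]
which is well defined thanks to $|H|>0$; since a closed totally umbilical submanifold of $\mathbb{S}^{n+k}_K$ is a spherical cap, it is enough to show that $f_\sigma\equiv 0$ on some interval $(-\infty,T_1]$. The evolution equations in the sphere differ from the Euclidean ones by extra terms linear in $K|H|^2$ and $K|h|^2$; under the pinching $|h|^2 \le \tfrac{4}{3n}|H|^2$ these have favorable sign, and by a computation analogous to Andrews--Baker (in the spirit of the spherical argument used in \cite{pipolisin}) one obtains, for $p$ large and $\sigma$ small,
\[
\frac{d}{dt}\int_{M_t} f_\sigma^p\,d\mu_t \;\le\; -2p\sigma \int_{M_t}\bigl(|H|^2 + c\,K\bigr)\,f_\sigma^p\,d\mu_t,
\]
with $c\ge 0$, the gradient terms on the right-hand side of Baker's identity being absorbed exactly as in the proof of Proposition \ref{pro1}.

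The main simplification over the Euclidean case is the volume bound. A spherical analogue of Chen's inequality provides a lower sectional-curvature bound of the form $K_\pi \ge K + \tfrac12\bigl(\tfrac{1}{n-1}-\tfrac{4}{3n}\bigr)|H|^2$; combined with $K>0$ and the pinching this yields a uniform positive lower bound $\mathrm{Ric}\ge (n-1)K'$ on the evolving submanifolds. By Bonnet--Myers the intrinsic diameter is bounded by a constant depending only on $n$ and $K'$, and Bishop--Gromov then gives $|M_t|\le C$ uniformly in $t$, thus bypassing the Hamilton-style diameter argument. Hölder's inequality and the ODE comparison proceed verbatim as in Section 2.1: $\int f_\sigma^p \to 0$ as $T_0\to-\infty$, so $f_\sigma\equiv 0$ and $M_t$ is a shrinking spherical cap.

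For Part 2 the function $f_\sigma$ is no longer a priori defined, and I would proceed by a dichotomy. If $|H|\equiv 0$ at every time, the flow is stationary, $M$ is a fixed minimal submanifold of $\mathbb{S}^{n+k}_K$ satisfying $|h|^2\le \beta K$, and a Simons-type gap theorem (with $\beta$ in the stated range) forces $M$ to be totally geodesic. Otherwise $|H|$ is not identically zero at some time and the strong maximum principle applied to the parabolic equation for $|H|^2$, which now carries the favorable zero-order term $+2nK|H|^2$ coming from the ambient curvature, yields $|H|>0$ on all of $M_t$ for every $t$. This puts us back in the framework of Part 1, with the relaxed pinching $|h|^2\le \alpha|H|^2+\beta K$. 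To accommodate the additional $\beta K$ term I would replace $f_\sigma$ by the regularised quotient
\[
\tilde f_\sigma \;=\; \frac{|h|^2 - \tfrac{1}{n}|H|^2 - \gamma K}{\bigl(|H|^2 + \lambda K\bigr)^{1-\sigma}},
\]
with $\gamma\ge 0$ and $\lambda>0$ chosen so that the new $K$-dependent contributions in the numerator are controlled by the denominator and by the Baker-type gradient term; the remaining integral argument is then identical.

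The main technical obstacle I expect is precisely this last bookkeeping: carrying the explicit $K$-dependent coefficients through the spherical versions of the Andrews--Baker estimates and checking that, for the dimension-dependent ranges of $(\alpha,\beta)$ listed in Part 2, the unfavorable terms can still be absorbed into the good gradient term. The case-by-case statement of those constants (different for $n=2$, $n=3$, $n\ge 4$) is a direct reflection of this delicate algebra.
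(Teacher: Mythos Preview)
Your plan departs substantially from what the paper actually does, and in Part~2 it contains a real gap.

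\textbf{Comparison with the paper's argument.} The paper never uses the integral/Stampacchia machinery in the spherical case. The whole point is that the positive ambient curvature produces an extra zero--order term with a good sign in the \emph{pointwise} evolution equation. For Part~1 the paper takes $\sigma=0$, i.e.\ $f_0=\frac{|h|^2-\frac1n|H|^2}{|H|^2}$, and shows directly (using the spherical evolution equations and the estimates \eqref{eqr2}--\eqref{eqr1} for $R_1,R_2$) that
\[
\partial_t f_0 \;\le\; \Delta f_0 + \tfrac{4H}{|H|^2}\langle \nabla|H|,\nabla f_0\rangle \;-\; 4nK\,f_0 .
\]
The maximum principle then gives $\max_{M_{t_1}} f_0 \le e^{-4nK(t_1-t)}\max_{M_t} f_0$, which blows up as $t\to-\infty$ unless $f_0\equiv 0$. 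No $L^p$ estimates, no volume bound, no diameter argument is needed; your Bonnet--Myers observation is correct but simply not used. Your Part~1 scheme would presumably also work, but it is considerably heavier than what the situation requires.

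\textbf{The gap in Part 2.} Your dichotomy relies on the strong maximum principle for $|H|^2$ to conclude that if $|H|\not\equiv 0$ at some time then $|H|>0$ everywhere for all times. But the evolution equation
\[
\partial_t|H|^2 \;=\; \Delta|H|^2 \;-\; 2|\nabla H|^2 \;+\; 2nK|H|^2 \;+\; 2R_2
\]
has the bad term $-2|\nabla H|^2$, so $|H|^2$ is not a supersolution of a linear parabolic operator and the standard strong minimum principle does not apply; moreover, even a forward-in-time positivity statement would not by itself give positivity on the whole ancient interval. The paper sidesteps this entirely: it works with $f=\dfrac{|\mathring h|^2}{a|H|^2+bK}$, whose denominator is uniformly positive because $bK>0$, so $f$ is smooth regardless of whether $|H|$ vanishes. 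The reaction terms are analysed separately at points with $|H|\ne 0$ (via the adapted frame and \eqref{eqr2}--\eqref{eqr1}) and at points with $|H|=0$ (via $R_1\le \tfrac32|h|^4$), and in both cases one obtains $\partial_t f \le \Delta f + (\text{gradient term}) - 2\varepsilon K f$. The same backward maximum-principle contradiction then forces $f\equiv 0$, giving either a totally geodesic submanifold or a spherical cap. Your regularised quotient $\tilde f_\sigma$ is in the right spirit, but once the denominator is made nonvanishing there is no reason to introduce $\sigma>0$ or to pass to integrals: the pointwise exponential decay from the $-\varepsilon K f$ term already closes the argument.
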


\begin{proof}
	
The evolution equations for metric and curvature in a spherical ambient manifold, see \cite{B}, are given by:
\begin{align}
\frac{\partial|H|^2}{\partial t} &= \Delta |H|^2 -2|\nabla H|^2 +2nK|H|^2 + 2R_2\\
\frac{\partial|h|^2}{\partial t} &= \Delta |h|^2 -2 |\nabla h|^2 + 2 R_1 + 4K|H|^2 -2nK|h|^2, \label{evohsp}
\end{align}
where $R_1$ and $R_2$ are as in \eqref{erreuno} and \eqref{erredue}.
To prove the first statement, let us define
\begin{equation*}
f_0 = \frac{|h|^2-\frac{1}{n}|H|^2}{|H|^2}.
\end{equation*}
\\
Using the evolution equations above, we find
\begin{align}
\frac{\partial f_0}{\partial t} =\,& \Delta f_0 + \frac{4H}{|H|^2}\left\langle\nabla |H|, \nabla f_0\right\rangle - \frac{2}{|H|^2}\left[|\nabla h|^2 - \left(\frac{1}{n} + f_0\right) |\nabla H|^2\right] \notag \\[5pt]
& +\frac{2}{|H|^2}\left[R_1 - \left(\frac{1}{n}+f_0\right) R_2\right] - 4nK f_0.
\end{align}
By our pinching assumption, we have $\frac{1}{n} + f_0 \leq \frac{4}{3n}<\frac{3}{n+2}$. Then the gradient terms give a nonpositive contribution, as it is well known, see e.g. \cite{B}, that
\begin{equation}\label{gradhH}
|\nabla h|^2 \geq \frac{3}{n+2}|\nabla H|^2.
\end{equation}

In order to analyse the reaction terms, we recall some more notation from \cite{B}. By assumption, $|H|^2 > 0$ everywhere, so we can choose an adapted orthonormal basis $\left\{\left\{e_i\right\}_{i=1}^n, \left\{\nu_{\alpha}\right\}_{\alpha=1}^k\right\}$ for the sphere, such that $\left\{\nu_{\alpha}\right\}_{\alpha=1}^k$ is a frame for the normal space of the submanifold with $\nu_1 = \frac{H}{|H|}$, while, if we write $\mathring{h} = h - \frac{1}{n} H \otimes g = \sum\limits_{\alpha=1}^k \mathring{h}_{\alpha}$, the frame $\left\{e_i\right\}_{i=1}^n$ is tangent and diagonalizes $\mathring{h}_1$. In addition, we denote the norm of $\mathring{h}$ in the other directions with $|\mathring{h}_-|^2$, so that $|\mathring{h}|^2 = |\mathring{h}_1|^2 + |\mathring{h}_-|^2$. 

With this choice we have
\begin{equation}
\label{eqr2}
R_2 = |\mathring{h}|^2 |H|^2 + \frac{1}{n} |H|^4,
\end{equation}
and the following estimate holds, proved in \cite[\S 5.2]{B},
\begin{equation}
\label{eqr1}
R_1-\frac 1n R_2 \leq |\mathring{h}_1|^4 + \frac 1n |\mathring{h}_1|^2 |H|^2 +
4 |\mathring{h}_1|^2|\mathring{h}_-|^2 + \frac{3}{2} |\mathring{h}_-|^4.
\end{equation}
From this, we obtain
\begin{align*}
2 \left[R_1 - \left(\frac{1}{n}+ f_0 \right)R_2 \right] &\leq 2|\mathring{h}_1|^4 + \left(\frac{2}{n} -2 f_0 \right)|\mathring{h}_1|^2|H|^2 - \frac{2}{n} f_0 |H|^4 \\[5pt]
 &+8 |\mathring{h}_1|^2|\mathring{h}_-|^2 + 3 |\mathring{h}_-|^4.
\end{align*}

Using the definition of $f_0$ and substituting $|\mathring{h}|^2 = |\mathring{h}_1|^2 + |\mathring{h}_-|^2$, some terms simplify and the right hand side can be rewritten as
$$
 |\mathring{h}_-|^2\left(6|\mathring{h}_1|^2-\frac{2}{n}|H|^2 + 3|\mathring{h}_-|^2\right).
$$
Thus, using our pinching assumption, we conclude
\begin{align}
2R_1 - 2\left(\frac{1}{n}+f_0 \right)R_2 & \leq
 |\mathring{h}_-|^2\left(6|\mathring{h}_1|^2-\frac{2}{n}|H|^2 + 3|\mathring{h}_-|^2\right)\notag\\[5pt]
& \leq 2|\mathring{h}_-|^2\left(3|h|^2 - \frac{4}{n} |H|^2\right) \leq 0.
\end{align}

So we can proceed as in the first case of \cite{HS}; we have
\begin{equation}\label{expsfera}
\frac{\partial f_0}{\partial t} \leq \Delta f_0 + \frac{4H}{|H|^2}\left\langle\nabla f_0, \nabla H\right\rangle -4nK f_0.
\end{equation}
If there existed $t_1 < 0$ such that $f_0(t_1)$ is not identically zero on $M_{t_1}$, we could apply the maximum principle to get
\begin{equation}
0 < \max_{M_{t_1}} f_0 \leq e^{-4nK(t_1-t)}\max_{M_t} f_0 \;\;\; \forall t<t_1
\end{equation}
and the function would explode for $t \rightarrow -\infty$, contradicting our assumption  \hbox{$f_0 \leq \frac{4}{3n} - \frac{1}{n}$}.
So $f_0$ is identically zero for all times and $M_t$ is a family of totally umbilical submanifolds of the sphere.

For the second case, we do not require $|H|^2 \neq 0$; we want to show that $M_t$ is a shrinking spherical cap or an equator.

We can follow a computation similar to par. 5.3 in \cite{B} and consider a perturbed pinching function \hbox{$f=\frac{|\mathring{h}|^2}{a|H|^2+bK}$} where $a=\alpha - \frac{1}{n}$ and the constant $b$ is given by:
$$
b=\left\{
\begin{aligned}
	&\;\frac{11}{10}\quad\, &&n \geq 4\\[5pt]
	&\;\frac{33}{40} \quad\, &&n=3\\[5pt]
	&\frac{24\beta}{13(4-\beta)} &&n=2.
\end{aligned}
\right.
$$

The function $f$ satisfies the equation
\begin{align}
\frac{\partial}{\partial t} f &= \Delta f + \frac{2a}{a|H|^2+bK}\left\langle \nabla_i |H|^2, \nabla_i f\right\rangle  \nonumber\\
&-\frac{2}{a|H|^2+bK}\left(|\nabla h|^2-\frac{1}{n}|\nabla H|^2 - \frac{a|\mathring{h}|^2}{a|H|^2+bK}|\nabla H|^2\right) \label{eqnf}\\
&+ \frac{2}{a|H|^2+bK}\left(R_1 -\frac{1}{n} R_2 - n K|\mathring{h}|^2-\frac{aR_2|\mathring{h}|^2}{a|H|^2+bK}-\frac{anK|\mathring{h}|^2|H|^2}{a|H|^2+bK}\right). \nonumber
\end{align}
Using \eqref{gradhH} and $b \leq \beta$, we can estimate the gradient terms as follows:
\begin{align*}
-&\left(|\nabla h|^2-\frac{1}{n}|\nabla H|^2 - \frac{a|\mathring{h}|^2}{a|H|^2+bK}|\nabla H|^2\right)\\ &\;\leq -\left(\frac{2(n-1)}{n(n+2)}-\frac{a^2 |H|^2 + a\beta K}{a|H|^2 +bK} \right)|\nabla H|^2\\
&\;\leq - \left(\frac{2(n-1)}{n(n+2)}-\frac{a\beta}{b}\right)|\nabla H|^2,
\end{align*}
and this expression is negative for the chosen values of $b$. 

At the points with $|H|^2 \neq 0$, we can employ the adapted frame to analyse the reaction terms in \eqref{eqnf}. We set
$$
\tilde R = \left(a|H|^2+bK\right)\left(R_1-\frac{1}{n}R_2 - nK|\mathring{h}|^2\right) - aR_2|\mathring{h}|^2-anK|\mathring{h}|^2|H|^2
$$
and we use \eqref{eqr2}--\eqref{eqr1} to obtain
\begin{align*}
\tilde R  \leq & \ a |H|^2 \left(3|\mathring{h}_1|^2|\mathring{h}_-|^2+\frac{3}{2}|\mathring{h}_-|^4 -\frac{1}{n}|\mathring{h}_-|^2|H|^2 \right) \\
&+ bK\left(|\mathring{h}_1|^4+\frac{1}{n}|\mathring{h}_1|^2|H|^2+4|\mathring{h}_1|^2|\mathring{h}_-|^2+\frac{3}{2}|\mathring{h}_-|^4\right) \\
& - nK|\mathring{h}|^2(2a|H|^2+bK).
\end{align*}
We use a Peter-Paul inequality to estimate
$$
|\mathring{h}_1|^4+4|\mathring{h}_1|^2|\mathring{h}_-|^2+\frac{3}{2}|\mathring{h}_-|^4
\leq \frac 53 |\mathring{h}_1|^4 + \frac {10}3 |\mathring{h}_1|^2|\mathring{h}_-|^2 + \frac 53 |\mathring{h}_-|^4 = \frac{5}{3}|\mathring{h}|^4.
$$
Thanks to our assumptions, we have $|H|^2 \geq \frac{|\mathring{h}|^2-\beta K}{a}$, $a \leq \frac{1}{3n}$ and $b \leq \beta$. Using this, we find
\begin{align*}
\tilde R  \leq & \ a|H|^2|\mathring{h}_-|^2\left(3|\mathring{h}|^2-\frac{1}{na}\left(|\mathring{h}|^2-\beta K\right)\right) \\
& + \frac 53 bK  |\mathring{h}|^4 +\frac{bK}{n}|\mathring{h}_1|^2|H|^2 - nK|\mathring{h}|^2(2a|H|^2+bK) \\
 \leq & \ |H|^2 \left(\frac{\beta K}{n}|\mathring{h}|^2-2anK|\mathring{h}|^2\right) +\frac{5}{3}bK|\mathring{h}|^4 -nbK^2|\mathring{h}|^2.
\end{align*}

Since our choice of the constants implies that $\beta/n < 2an$, we can use again $|H|^2 \geq \frac{|\mathring{h}|^2-\beta K}{a}$ and $\beta \geq b$ to find that,
for any small $\epsilon>0$
\begin{align*}
\tilde R \leq &
 -\epsilon a K |H|^2 |\mathring{h}|^2
-\frac{1}{a}(|\mathring{h}|^2-\beta K)\left(2anK - \epsilon aK-\frac{\beta K}{n}\right) |\mathring{h}|^2
  \\
  & +\frac{5}{3}bK|\mathring{h}|^4-nbK^2|\mathring{h}|^2 \\
\leq & 
-\left[2n-\epsilon-\frac{\beta }{na}-\frac{5}{3}b\right] K |\mathring{h}|^4-\left[\frac{\beta^2}{na}+nb -2n\beta\right] K^2|\mathring{h}|^2 \\
& -\epsilon K(a |H|^2 +bK) |\mathring{h}|^2.
\end{align*}
With our choice of $a, \beta, b$, we can check that both expressions in square parentheses are negative if $\epsilon$ is suitably small. The above estimates, together with \eqref{eqnf}, imply the inequality
\begin{equation*}
\frac{\partial f}{\partial t} \leq \Delta f + \frac{2a}{a|H|^2+bK}\left\langle \nabla_i |H|^2, \nabla_i f\right\rangle -2 \epsilon K f.
\end{equation*}

It remains to consider the points where $H=0$. In this case we have $f= \frac{|h|^2}{b}$ and the reaction terms in \eqref{eqnf} become
$$
\frac{2}{b}\{R_1 - nK|h|^2\}.
$$
As in \cite[Lemma 5.1]{B}, $R_1 \leq \frac{3}{2}|h|^4$ holds whenever $H=0$, so that
$$
2R_1 - 2nK|h|^2  \leq \left(3 \beta - 2n \right) K |h|^2=  \left(3 \beta - 2n \right) Kbf.
$$
Since $\beta<\frac{2n}{3}$, we conclude that the reaction terms are bounded above by a negative multiple of $f$ also at these points. Then, as in the first part of the theorem, we can apply the maximum principle and find a contradiction unless $f \equiv 0$. Since we are allowing $H=0$, the solution can be either a shrinking spherical cap or a totally geodesic submanifold.
\end{proof}

\section{Hypersurface flows with a general speed}

In this section, we turn our attention to ancient solutions of flows of hypersurface immersions of $M$ into $\mathbb{R}^{n+1}$ of the form 
\begin{equation}
\frac{\partial \varphi}{\partial t}(x,t) = -F(W(x,t))\nu(x,t) = -f(\lambda(W(x,t)))\nu(x,t)
\end{equation}
where $\lambda$ is the function that associates to a self-adjoint operator its ordered eigenvalues. We assume throughout
\begin{description}
\item[(H1)] $f:\Gamma \rightarrow \mathbb{R}$ is a symmetric smooth function, homogeneous of degree $\alpha$ for some $\alpha>0$, defined on an open symmetric cone $\Gamma \subset \mathbb{R}^n$ containing the positive cone $\Gamma_+$.
\item[(H2)]  $f$ satisfies
$$
\frac{\partial{f}}{\partial{\lambda_i}} > 0 \mbox{ in $\Gamma$ for all $i =1,\dots,n$.}
$$
 \end{description}
We remark that homogeneity and monotonicity imply that $f$ is also strictly positive on $\Gamma_+$, thanks to the Euler relation.
It is well known (see, for example, \cite{Ger}) that, under the above assumptions, $F$ is also smooth and homogeneous of degree $\alpha$ as a function of the components of $W$. In addition, if $f$ is a convex (concave) function, then the same holds for $F$.
We will denote the derivatives of $F$ by a dot, so that
\begin{align*}
\frac{d}{ds}F(A+sB)|_{s=0}=\dot{F}^{ij}|_AB_{ij}\\[5pt]
\frac{d^2}{ds^2}F(A+sB)|_{s=0}=\ddot{F}^{ij,kl}|_AB_{ij}B_{kl}
\end{align*}
for any $A,B$ symmetric matrices such that $\lambda(A)$ and $\lambda(B)$ belong to $\Gamma$.
Assumption (H2) ensures that $\dot{F}$ is positive definite and that \eqref{flowh} is a parabolic system. We denote by $\mathcal{L}$ the elliptic operator on $C^{\infty}(M)$ defined as $\mathcal{L}=\dot{F}^{ij} \nabla_i\nabla_j$. 

We will consider a convex ancient solution of \eqref{flowh} satisfying a uniform pinching condition on the principal curvatures of the following form: there exists $C_1>0$ such that
\begin{equation}
\lambda_n<C_1\lambda_1, \qquad
\mbox{for all $t \in (-\infty, 0)$, for all $x \in M_t$}.
\label{condpinch}
\end{equation}

We first present an estimate on the speed and the diameter of a pinched ancient solution which holds under very general assumption on $F$. To this purpose, we recall some definitions. If $M$ is a $n$-dimensional embedded submanifold of $\mathbb{R}^{n+1}$ bounding a convex body $\Omega$,
the inner and outer radii of $M$ are defined respectively as
\begin{align*}
\rho_-&=\sup\left\{r\, |\, B_r(y) \subset \Omega \,\, \text{for some } y \in \mathbb{R}^{n+1}\right\}\\
\rho_+&=\inf\left\{r\,|\, \Omega \subset B_r(y)\,\, \text{for some } y \in \mathbb{R}^{n+1}\right\}.
\end{align*}
Along our flow, these quantities depend on time and will be denoted by $\rho_\pm(t)$.

By a result in \cite{A1}, if the pinching condition \eqref{condpinch} holds, then there also exists $\bar C_1=\bar C_1(C_1,n)$ such that
\begin{equation}\label{ratio}
\rho_+(t) \leq \bar C_1 \rho_-(t), \qquad \forall t<0.
\end{equation}

\subsection{A general estimate}

We can prove that the pinching condition \eqref{condpinch} implies strong bounds on the inner and outer radii and on the speed of an ancient solution. The result holds for general homogeneous speeds and can be proved by a well known technique first introduced in \cite{T}.

\begin{thm}\label{teorbound0}
Let $M_t=\varphi(M,t)$ be a convex ancient solution of the flow \eqref{flowh} defined for $t \in (-\infty,0)$ and shrinking to a point as $t \to 0$. Suppose that the pinching condition \eqref{condpinch} is satisfied for some $C_1>0$.
Then there exist constants $C_2,C_3$ such that for all $t \in (-\infty,-1)$:

\begin{equation}\label{diameter}
C_2^{-1} \rho_+(t) \leq |t|^{\frac{1}{\alpha+1}} \leq  C_2 \rho_-(t)\,,
\end{equation}

\begin{equation}\label{supF} 
\sup F(\cdot, t)  \leq C_3 |t|^{-\frac{\alpha}{1+\alpha}}\,.
\end{equation}
\end{thm}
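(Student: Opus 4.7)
The theorem consists of two distinct estimates: a two-sided control \eqref{diameter} on the inner and outer radii, and an upper bound \eqref{supF} on the speed. I will prove them in this order, using sphere comparison for the first and a Tso-type auxiliary-function argument for the second.

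\textbf{Step 1: Sphere comparison yields \eqref{diameter}.} Under \eqref{flowh}, a round sphere of radius $r(t)$ evolves as $\dot r = -F(r^{-1}I) = -c\,r^{-\alpha}$ where $c := f(1,\dots,1) > 0$; integration shows that a sphere of radius $r_0$ at time $t_0$ vanishes at time $t_0 + r_0^{\alpha+1}/[c(\alpha+1)]$. Fix $t_0 < -1$; let $B_-$ and $B_+$ be an inscribed and a circumscribed ball for $\Omega_{t_0}$, of radii $\rho_\pm(t_0)$. Since $\dot F^{ij}$ is positive definite the flow is strictly parabolic on convex hypersurfaces and obeys the avoidance principle, so $B_-$ evolves inside $M_t$ and $B_+$ encloses $M_t$ for every $t \ge t_0$ on which both make sense. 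Because $M_t$ shrinks to a point precisely at $t = 0$, the evolving $B_-$ must disappear no later than $t=0$ and the evolving $B_+$ no earlier, giving
\[
\rho_-(t_0)^{\alpha+1} \le c(\alpha+1)\,|t_0| \le \rho_+(t_0)^{\alpha+1}.
\]
Combined with \eqref{ratio}, which gives $\rho_+(t_0) \le \bar C_1 \rho_-(t_0)$, these two inequalities together imply \eqref{diameter} with $C_2$ depending only on $n, \alpha, C_1$ and $f(1,\dots,1)$.

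\textbf{Step 2: Tso's argument yields \eqref{supF}.} Fix $t^* \in (-\infty,-1)$ and let $p$ be the incenter of $\Omega_{t^*}$. Because the family $\{\Omega_t\}$ is nested (shrinking) and $p \in \Omega_{t^*}$, the point $p$ lies in $\Omega_t$ for every $t \le t^*$, and the support function $\sigma(x,t) := \langle \varphi(x,t) - p, \nu(x,t)\rangle$ is positive on $M_t$. Introduce the Tso auxiliary function
\[
Q = \frac{F}{\sigma - k}, \qquad k = \tfrac{1}{2}\rho_-(t^*).
\]
The standard evolution equations for $F$ and $\sigma$ under \eqref{flowh}, together with the homogeneity relation $\dot F^{ij} h_{ij} = \alpha F$, lead after a direct computation to a parabolic inequality of the form
\[
\partial_t Q \le \mathcal L Q + B^i\,\nabla_i Q - \varepsilon\, Q^2 + E,
\]
where $\varepsilon > 0$ depends only on $\alpha$ and the error $E$ can be absorbed using the lower bound on $\sigma - k$. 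The quadratic negative term dominates at any spatial maximum of $Q$, so by applying the parabolic maximum principle on a backward interval $[T, t^*]$ and exploiting the self-improving nature of the inequality (sending $T \to -\infty$), one obtains a bound $\sup_{M_{t^*}} Q \le C |t^*|^{-1}$ with $C = C(n,\alpha,C_1,f(1,\dots,1))$. Multiplying by $\sigma - k \le \rho_+(t^*) \le C_2 |t^*|^{1/(\alpha+1)}$, which follows from Step 1, yields \eqref{supF}.

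\textbf{Main obstacle.} The sphere-comparison in Step 1 is routine. The subtler point is Step 2, where one must ensure that $\sigma - k$ stays of order $\rho_-(t^*)$ on a backward time interval long enough for the maximum principle to produce a bound independent of any initial data. This requires controlling the drift of the incenter of $M_t$ away from the fixed point $p$ as $t$ decreases, for which the pinching condition \eqref{condpinch} combined with \eqref{ratio} and the diameter bound \eqref{diameter} are essential. A clean way to formalize this is to apply the parabolic rescaling $(x,t)\mapsto (|t^*|^{-1/(\alpha+1)} x,\,|t^*|^{-1} t)$, which renders the estimate scale-invariant and reduces it to a bounded-time statement on $[-1,-1/2]$ where the classical Tso argument applies directly.
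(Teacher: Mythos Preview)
Your Step 1 is correct and is exactly the paper's argument: sphere comparison gives $\rho_-(t) \le c'|t|^{1/(\alpha+1)} \le \rho_+(t)$, and then the pinching-induced bound \eqref{ratio} closes the two-sided estimate.

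Step 2 has the right strategy (Tso's auxiliary function, same as the paper) but the execution contains genuine errors.

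\emph{The evolution inequality is misstated.} With $q = F/(2u - \rho_-(t^*))$ the reaction term computes to
\[
\frac{F\bigl((1+\alpha)F - \rho_-(t^*)\,\dot F^{ij}h_{ik}h^k_j\bigr)}{(2u-\rho_-(t^*))^2},
\]
and using only homogeneity one has $\dot F^{ij}h_{ik}h^k_j \ge \alpha \lambda_1 F$. There is no a priori lower bound on $\lambda_1$, so the coefficient of $q^2$ is \emph{positive}, equal to $(1+\alpha)$; you do not get $-\varepsilon q^2$ with $\varepsilon>0$ depending only on $\alpha$. What the pinching \eqref{condpinch} actually buys is the inequality $F \le C(\lambda_1)^\alpha$, hence $\lambda_1 \ge cF^{1/\alpha}$ and
\[
\dot F^{ij}h_{ik}h^k_j \ge c\,F^{1+1/\alpha},
\]
which yields the correct differential inequality for $Q(t)=\max q$:
\[
\frac{d}{dt}Q \le (1+\alpha)\,Q^2 - c\,\rho_-(t^*)^{1+1/\alpha}\,Q^{2+1/\alpha}.
\]
The structure is $+AQ^2 - BQ^{2+1/\alpha}$, not $-\varepsilon Q^2 + E$; from the form you wrote, the comparison ODE would give only $Q \le \mathrm{const}$, not $Q \le C|t^*|^{-1}$.

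\emph{The backward argument is different from what you describe.} With the correct ODE above, the paper argues by contradiction: if $Q(t^*)$ exceeds the equilibrium value $(A/B)^\alpha = C\rho_-(t^*)^{-(\alpha+1)}$, then the comparison solution $\psi$ with $\psi(t^*)=Q(t^*)$ is increasing backwards and blows up at a finite earlier time (since $B\psi^{2+1/\alpha}$ is superlinear), contradicting ancientness. Hence $Q(t^*) \le C\rho_-(t^*)^{-(\alpha+1)} \le C|t^*|^{-1}$ by Step~1, and multiplying by $2u-\rho_- \le 2\rho_+(t^*)$ gives \eqref{supF}.

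\emph{Your ``main obstacle'' is not an obstacle.} Because the flow is shrinking, $\Omega_t \supset \Omega_{t^*}$ for every $t \le t^*$; in particular the inball of $\Omega_{t^*}$ is contained in $\Omega_t$, so $u(\cdot,t) \ge \rho_-(t^*)$ and $2u - \rho_-(t^*) \ge \rho_-(t^*)$ automatically on all of $(-\infty,t^*]$. There is no incenter drift to control, and the rescaling you propose, while workable, is unnecessary.
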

\begin{proof}
For simplicity, we assume that $f$ is normalized in order to satisfy $f(1,1,\dots,1)=1$. Then the spherical solution of \eqref{flowh} which shrinks to a point at time $t=0$ has a radius given by $(\alpha+1)|t|^{\frac{1}{\alpha +1}}$. By comparison, we deduce
\begin{equation}\label{compspheres}
\rho_-(t) \leq (\alpha+1)|t|^{\frac{1}{\alpha +1}} \leq \rho_+(t), \qquad \forall t<0.
\end{equation}
Combining these inequalities with \eqref{ratio}, we immediately obtain \eqref{diameter}. 

To bound $\sup F$ from above, we use a well-known technique first introduced in \cite{T}. We fix any $t_0 < 0$ and we call $z_0$ the center of a ball realizing $\rho_-(t_0)$. We denote by $u(x,t)=\left\langle \phi(x,t)- z_0, \nu(x,t)\right\rangle $  the support function centered at $z_0$. By convexity, we have $u(\cdot,t_0) \geq \rho_-(t_0)$ and the inequality holds for all times $t \leq t_0$, thanks to the shrinking nature of the flow.
Hence, the function
\begin{equation*}
q(x,t)=\frac{F}{2 u - \rho_-(t_0)}
\end{equation*}
is well defined for $t \in (-\infty,t_0)$.
We have the evolution equation, see e.g. \cite{AMCZ}:

$$
\left(\frac{\partial}{\partial t} - \mathcal{L}\right)q = \frac{4}{u - \rho_-(t_0)}\dot{F}^{ij}
\nabla_i u \nabla_j q
+\frac{F((1+\alpha)F-\rho_-(t_0)\dot{F}^{ij}h_{ik} h^k_j)}{u - (\rho_-(t_0))^2}.
$$
The pinching condition allows to estimate
$$
\dot{F}^{ij} h_{ik} h^k_j = 
\sum_i\frac{\partial{f}}{\partial \lambda_i}\lambda_i^2 
\geq \lambda_1 \sum_i\frac{\partial{f}}{\partial \lambda_i}\lambda_i
= \alpha \lambda_1 F.
$$
Let us set $|\lambda|=\sqrt{\sum_i \lambda_i^2}$. Define
$$
K=\max \left\{ f(\lambda_1,\dots,\lambda_n)  ~:~ 0< \lambda_n \leq C_1 \lambda_1, \ |\lambda|=1 \right\}.
$$
By homogeneity, we have
\begin{equation}\label{conspinch}
f(\lambda_1,\dots,\lambda_n) \leq K |\lambda|^\alpha \leq K (\sqrt n  C_1 \lambda_1)^\alpha,
\end{equation}
for all $(\lambda_1,\dots,\lambda_n) \in \Gamma_+$ such that $\lambda_n \leq C_1 \lambda_1$. It follows
\begin{equation}
\label{quadr}
\dot{F}^{ij} h_{ik} h^k_j \geq C F^{1 + \frac 1\alpha},
\end{equation}
where we denote by $C$ any constant only depending on $n,C_1$.
 If we define $Q(t) = \sup_{M_t} q(x,t)$, we obtain the inequality
\begin{equation}
\frac{d}{dt}Q \leq Q^2(1+\alpha-C\rho_-(t_0)F^{\frac 1\alpha})\leq Q^2(1+\alpha-C\rho_-(t_0)^{1+\frac 1\alpha}Q^{\frac 1\alpha}).
\end{equation}
It follows that $Q(t) \geq \psi(t)$ for all $t \leq t_0$, where $\psi$ is the solution to the equation
\begin{equation}
\frac{d}{d t} \psi = \psi^2(1+\alpha-C\rho_-(t_0)^{1+\frac 1\alpha}\psi^{\frac 1\alpha}), \qquad t \leq t_0,
\end{equation}
with final datum $\psi (t_0) = Q(t_0)$. It is easily seen that, if $\psi(t_0)$ is such that the left hand side is negative, that is, if
\begin{equation}\label{contrad}
Q(t_0)^{\frac 1\alpha} > \frac{1+\alpha}{C\rho_-(t_0)^{1+\frac 1\alpha}},
\end{equation}
then $\psi(t)$ is decreasing for all $t<t_0$ and blows up at a finite time, since the right hand side is superlinear in $\psi$.
On the other hand, $Q(t)$ is defined for all $t \in (-\infty,t_0]$ and we obtain a contradiction. It follows that \eqref{contrad} cannot hold and that the reverse inequality is satisfied. This implies, by the definition of $F$ and by estimate \eqref{diameter},
$$
\max F(\cdot,t_0) \leq 2 \max u(\cdot,t_0) Q(t_0) \leq C \frac{\rho_+(t_0)}{\rho_-(t_0)^{\alpha+1}} \leq C |t|^{- \frac {\alpha}{\alpha+1}}.  
$$
Since $t_0 <0$ is arbitrary, this completes the proof of the inequality in \eqref{supF}.
\end{proof}

In the next section, we will show how the above result can be used to prove that pinched ancient solutions are spheres when the degree of homogeneity is one.

\begin{rmk}
In the previous theorem, the pinching hypothesis \eqref{condpinch} can be replaced by assuming a priori that the solution satisfies a bound of the form \eqref{ratio}, and that the $\alpha$-root of the speed function $f$ is inverse concave on the positive cone, that is, the function
$$
(\rho_1,\dots,\rho_n) \to f^{-\frac 1\alpha} \left( \rho_1^{-1}, \dots, \rho_n^{-1} \right)
$$
is concave. In fact, in this case property \eqref{quadr} holds even without the pinching assumption, see Lemma 5 in \cite{AMCZ}, and the same proof applies.
\end{rmk}

\subsection{Degree of homogeneity 1}

Let us now restrict to the case where the speed is homogeneous of degree one. We show here that the estimates of the previous section allows us to give a quick proof of the result that ancient pinched solutions are shrinking spheres, thus providing an alternative approach to the results of \cite{HS} and \cite{LL}.

We will require either convexity or concavity of the speed, since this will allow us to apply Krylov-Safonov's regularity theory for fully nonlinear parabolic equations  \cite{K} to the equations associated with our flow. An easy case of a convex speed is $f=|\lambda|$, while classical concave examples are $f=S_k^{1/k}$ or $f=S_k/S_{k-1}$, where $S_k$ are the elementary symmetric polynomials of the principal curvatures.

\begin{thm}\label{teorho1}
Let $M_t=\varphi(M,t)$ be an ancient solution of the flow \eqref{flowh}, with $f$ satisfying {\rm (H1)--(H2)} with $\alpha=1$. Suppose in addition that $f$ is either convex or concave on the positive cone, and that $M_t$ satisfies \eqref{condpinch}. Then $M_t$ is a family of shrinking spheres.
\end{thm}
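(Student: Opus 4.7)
The plan is to leverage the sharp type-I estimates of Theorem \ref{teorbound0} to carry out a parabolic rescaling and compactness argument. The convexity or concavity hypothesis on $f$ enters precisely because it makes \eqref{flowh} fit into Krylov-Safonov's regularity theory for fully nonlinear parabolic equations, which is what supplies the needed compactness.

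Applying Theorem \ref{teorbound0} with $\alpha=1$ gives $\rho_\pm(t) \sim |t|^{1/2}$ and $\sup_{M_t} F \leq C|t|^{-1/2}$; the pinching \eqref{condpinch} combined with homogeneity of degree one forces $\lambda_i \sim F$ pointwise, so $\sup|h| \leq C|t|^{-1/2}$ and $|M_t| \leq C|t|^{n/2}$. For $s<-1$ pick a centering point $q_s$ (say the incenter of the convex body bounded by $M_s$) and define the parabolically rescaled immersion
\[
\tilde\varphi_s(x,\tau) = |s|^{-1/2}\bigl(\varphi(x, |s|\tau) - q_s\bigr),
\]
which, by scale invariance at $\alpha=1$, is again a solution of \eqref{flowh} satisfying \eqref{condpinch}. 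The previous estimates translate into uniform (in $s$) bounds on rescaled diameter, inradius, and second fundamental form over any compact time interval in $(-\infty,0)$. Since $f$ is convex or concave, so is $F$ in the components of the Weingarten operator; Krylov-Safonov's $C^{2,\beta}$ regularity \cite{K} followed by Schauder bootstrap then yields uniform $C^\infty_{\mathrm{loc}}$ estimates on $\tilde\varphi_s$.

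By Arzel\`a-Ascoli, a subsequence $\tilde\varphi_{s_k}$ with $s_k\to-\infty$ converges smoothly on compact subsets of $(-\infty,0)\times M$ to a limit $\tilde\varphi_\infty$, which is itself a convex, pinched ancient solution of \eqref{flowh} with the same type-I bounds. Constructed as a parabolic blow-down, $\tilde\varphi_\infty$ is invariant under further parabolic rescaling, hence homothetically shrinking; a convex self-similarly shrinking solution of \eqref{flowh} with pinching ratio bounded by $C_1$ must be a round sphere. To transfer this conclusion back to $M_t$, consider the scale-invariant pinching function $G=(|h|^2-H^2/n)/F^2$: smooth convergence gives $\sup_{M_s} G\to 0$ as $s\to -\infty$, and a maximum-principle argument applied to the evolution of $G$, in which convexity or concavity of $F$ controls the gradient error terms, forces $G\equiv 0$ on $M_t$ for every $t$. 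Hence $M_t$ is totally umbilic, and therefore a shrinking sphere. The main obstacle is this final step together with the self-shrinker rigidity; an alternative route that sidesteps the shrinker classification is an integral estimate in the spirit of Proposition \ref{pro1} applied to $G$, where the gradient error terms in the evolution of $\int_{M_t} G^p\,d\mu_t$ are absorbed precisely thanks to the convexity or concavity of $F$, and the area bound $|M_t|\leq C|t|^{n/2}$ from Step 1 closes the estimate by the same integration procedure used in Proposition \ref{pro1}.
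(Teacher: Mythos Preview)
Your outline contains two genuine gaps, and together they leave the argument incomplete.

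First, the claim that the blow-down limit $\tilde\varphi_\infty$ is ``invariant under further parabolic rescaling, hence homothetically shrinking'' is unjustified. A \emph{subsequential} limit of parabolic rescalings has no reason to be self-similar: this would require either uniqueness of the limit or a monotonicity formula (such as Huisken's for mean curvature flow) forcing all blow-downs to coincide. For a general $1$-homogeneous speed $F$ there is no such monotonicity available, so the step fails. Even granting self-similarity, the assertion that a pinched convex self-shrinker for \eqref{flowh} must be a sphere is itself a nontrivial rigidity statement that you have not proved.

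Second, the transfer step is circular as written. Smooth convergence of $\tilde\varphi_{s_k}$ only gives $\sup_{M_{s_k}} G\to 0$ along the subsequence, not $\sup_{M_s} G\to 0$ for all $s\to-\infty$; to upgrade this you would need exactly the kind of maximum-principle or monotonicity control on $G$ that you flag as ``the main obstacle''. The alternative integral route you sketch at the end is in the right spirit, but you have not exhibited the evolution inequality for $\int G^p\,d\mu$ that would make it work for a general convex or concave $F$.

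The paper avoids both difficulties by working with a single fixed rescaling $\widetilde\varphi_\tau=(-2t)^{-1/2}(\varphi_t-p)$ and importing from Andrews \cite{A1} a ready-made monotone integral: for concave $F$ one sets $\eta=|\widetilde W|/\widetilde F$ and uses the inequality
\[
\frac{d}{d\tau}\int_M \widetilde K(\eta^p-\eta_0^p)\,d\widetilde\mu \;\leq\; -C\int_M \widetilde K\,\frac{|\widetilde\nabla\widetilde W|^2}{|\widetilde W|^2}\,d\widetilde\mu,
\]
valid once Theorem \ref{teorbound0} and Krylov--Safonov (including the Harnack inequality, which supplies the lower bound on $\widetilde F$ needed for uniform parabolicity --- a point your sketch also skips) give uniform two-sided curvature bounds. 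Integrating from $\tau_0\to-\infty$ forces the right-hand side to be integrable in $\tau$, so along some sequence $|\widetilde\nabla\widetilde W|_{L^2}\to 0$ and the limit is a sphere; monotonicity then forces the integral to vanish identically, hence $\eta\equiv\eta_0$ for all $\tau$. No self-shrinker classification and no blow-down uniqueness are needed: the monotone quantity does all the work.
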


\begin{proof}
We first observe that, under our hypotheses, our solution shrinks to a point at the singular time by the results of \cite{A1}. In that paper, additional assumptions are made in order to obtain the preservation of pinching, but we do not need them here because we are assuming pinching a priori; we observe that the estimates of Theorem \ref{teorbound0} apply. We adapt to our setting the procedure of \S 7 in \cite{A1}, with the difference that we want to prove convergence to a spherical profile backwards in time rather than forward; we will omit the details that are entirely analogous to \cite{A1}.

As usual, we assume that the singular time is $t=0$. We  consider a rescaling of the solution, choosing a new time variable \hbox{$\tau = -\frac{1}{2}\log(-t)$}, so that $\tau \in (-\infty, + \infty)$ and define immersions \hbox{$\widetilde{\varphi}_{\tau} = (-2t)^{-\frac{1}{2}}(\varphi_t - p)$}, where $p$ is the limit point of original immersions; quantities pertaining to rescaled solutions will be denoted with a tilde. By Theorem \ref{teorbound0}, $\tilde \rho_\pm$ and $\sup \widetilde{F}$ are bounded from above and below uniformly for all $\tau$. Then, as in Lemma 7.7 in \cite{A1}, we can write the rescaled solution as a spherical graph and apply Krylov-Safonov's Harnack inequality to show that $\min \widetilde{F}$ is bounded away from zero, which ensures that the curvatures of the rescaled solution stay in a compact subset of the positive cone and that the flow is uniformly parabolic.

Then, as in Lemma 7.9 in \cite{A1}, we can apply Krylov-Safonov's regularity results to show that the support function $\widetilde{u}$ of the rescaled immersion is uniformly bounded in $C^k$ for all $k$. We can thus find sequences of times going to $-\infty$ along which $\widetilde{u}$ converges to some limit $\widetilde{u}_{-\infty}$ that is the support function of a convex, compact hypersurface. To study the structure of the possible limits, we consider a suitable zero-homogeneous function of the curvatures whose integral is monotone along the rescaled flow. We recall for instance the procedure for a concave $F$. In this case we define $\eta = \frac{|\widetilde{W}|}{\widetilde{F}}$, and we denote by $\eta_0=\eta(1,\dots,1)$. It is easy to check that $\eta \geq \eta_0$, with equality only at umbilical points. Then \cite[Lemma 7.9]{A1} gives the following estimate, for suitably large $p>0$:
\begin{equation}\label{andrews}
\frac{d}{d \tau} \int_{M} \widetilde{K}(\eta^p-\eta_0^p) d\widetilde{\mu} \leq - C_5 \int_{M}\widetilde{K}\frac{|\widetilde{\nabla}\widetilde{W}|^2}{|\widetilde{W}|^2}d\widetilde{\mu}
\end{equation}
where $\widetilde{K}$ is the rescaled Gauss curvature. We can then integrate the inequality above on an interval $[\tau_0, \tau_1]$ and send $\tau_0$ to $-\infty$: as both $\eta^p-\eta_0^p$ and $\widetilde{K}$ are bounded, positive functions, the left-hand side remains bounded, and we conclude that
$$
\int_{-\infty}^0 \left(
\int_{M}\widetilde{K}\frac{|\widetilde{\nabla}\widetilde{W}|^2}{|\widetilde{W}|^2}d\widetilde{\mu} \right) d \tau < +\infty.
$$
So we can choose a sequence of times $\left\{ \tau_k\right\}$ such that $|\widetilde{\nabla}\widetilde{W}(\cdot,\tau_k)|_{L^2(M)} \to 0$; by possibly taking a further subsequence, the corresponding support functions $\tilde u(\cdot, \tau_k)$ converge smoothly to the support function of a standard sphere. Therefore, along this subsequence, we have
$$\int_{M} \widetilde{K}(\eta^p-\eta_0^p) d\widetilde{\mu} \to 0.$$
Since by \eqref{andrews} the left hand side is positive and nonincreasing, it must be identically zero for all $\tau$. Then the solution is a family of spheres, as claimed.
\end{proof}

\begin{rmk}
Under the additional assumption of inverse-concavity of $f$, these flows admit a differential Harnack inequality \cite{A2}. Then, as in \cite{HS} and Langford-Lynch \cite{LL}, it is possible to prove that the uniform pinching assumption \eqref{condpinch} can be replaced by other hypotheses, such as
\begin{enumerate}
\item a bound on the diameter of the form diam$(M_t) \leq C(1+\sqrt{|t|})$.
\item a pinching condition on the radii $\rho_+(t) \leq C \rho_-(t)$.
\item a bound on the isoperimetric ratio $|M_t|^{n+1} \leq C |\Omega_t|^n$.
\end{enumerate}
\end{rmk}

\subsection{Flows by powers of the Gauss curvature}

We will now deal with a precise class of flows, namely
\begin{equation}\label{GCF}
\frac{\partial \varphi}{\partial t}(x,t)= -K^{\beta}(x,t)\nu(x,t)
\end{equation}
where $K$ is the Gauss curvature of the submanifold and $\beta > 0$.

Many authors in the last decades have investigated the singular behaviour of these flows. The first one was Firey \cite{Firey}, in the case $\beta=1$, who proved that a compact convex hypersurface with spherical symmetry shrinks to a round point in finite time, and conjectured that the same property holds without symmetry assumption.  After various partial results through the decades, the conjecture was proved for a general $\beta>1/(n+2)$ by combining the results of the papers \cite{AGN,DaskaChoi,BrendlDaskaChoi}, where the reader can also find more detailed references.

In this section we consider an ancient compact convex solution of \eqref{GCF} defined in $(-\infty,0)$. We translate the coordinates if necessary so that the solution shrinks to the origin as $t \to 0$. Following \cite{AGN,GN}, we consider the rescaled flow $\widetilde{\varphi}(\cdot,\tau)=e^{\tau}\varphi(\cdot,t(\tau))$, where $t$ and $\tau$ are related by
$$
\tau(t)=\frac{1}{n+1}\log\left(\frac{|B(1)|}{|\Omega_t|}\right) \,.
$$
Here $|B(1)|$ is the volume of the unit ball, which we can also write as \hbox{$|B(1)|=(n+1)^{-1}\omega_n$}, with $\omega_n=|\mathbb{S}^n|$.
In this way, the volume of the rescaled enclosed region $\widetilde \Omega_\tau$ is constant and equal to $|B(1)|$. In addition, the flow is defined for $\tau \in (-\infty,\infty)$ and satisfies the equation
\begin{equation}\label{GCFres}
\frac{\partial \widetilde{\varphi}}{\partial \tau}(x,\tau)= -\frac{\tilde K^{\beta}(x,t)}{\omega_n^{-1}\int_{\mathbb{S}^n} \tilde K^{\beta-1} d\theta}\nu(x,\tau)+\widetilde{\varphi}(x,\tau).
\end{equation}

An important feature of these flows is the existence of monotone integral quantities, called entropies, see e.g. \cite{ChowHar,Firey}.  Here we will use the ones considered in \cite{AGN}, which are defined as follows. Let $M$ be any convex embedded hypersurface in $\mathbb{R}^{n+1}$ and let $\Omega$ be the convex body enclosed by $M$.
The entropy functional $\mathcal{E}_{\beta}(\Omega)$ is defined for each $\beta>0$ as 
\begin{equation*}
\mathcal{E}_{\beta}(\Omega)=\sup_{z \in \Omega} \mathcal{E}_{\beta}(\Omega, z)\, ,
\end{equation*}
where 
$$
\mathcal{E}_{\beta}(\Omega,z)=
\left\{
\begin{aligned}
&\frac{1}{\omega_n} \int_{\mathbb{S}^n} \log u_{z}d\theta \quad\quad && \beta = 1\\
&\frac{\beta}{\beta-1}\log\left(\frac{1}{\omega_n}\int_{\mathbb{S}^n} u_z^{1-\frac{1}{\beta}} d\theta\right)\quad\quad && \beta \neq 1.
\end{aligned}
\right.
$$
Here, $u_{z}$ is the support function of $\Omega$ with respect to the center $z$, that is,  $u_{z}(\theta)=\langle \nu^{-1} (\theta) - z, \theta \rangle $ for $\theta \in \mathbb{S}^n$, where $\nu^{-1}$ is the inverse of the Gauss map. For simplicity of notation, we will use in the following the same symbol for $u_z$ considered as a function on $ \mathbb{S}^n$ and as a function defined on $M$. In \cite{AGN}, it is proved that for each $\Omega$, there exists a unique point $e \in \Omega$, called entropy point, such that the supremum in the definition is attained.

The main property of the entropy is monotonicity along the solutions of the rescaled flow \eqref{GCFres}. In fact, we have the inequality (see \cite{AGN}, Theorem 3.1):
\begin{equation}
\frac{d}{d\tau} \mathcal{E}_{\beta}(\widetilde{\Omega}_\tau) \leq
-\left[\frac{\int_{\mathbb{S}^n} f^{1+\frac 1\beta} d\sigma_\tau \cdot \int_{\mathbb{S}^n} d\sigma_\tau}
{\int_{\mathbb{S}^n} f^{\frac 1\beta} d\sigma_\tau \cdot \int_{\mathbb{S}^n} f d\sigma_\tau} -1 \right], \label{deriventro}
\end{equation}
where $f=\frac{\widetilde K^\beta}{{u}_{e(\tau)}}$, $d\sigma_\tau=\frac{{u}_{e(\tau)}}{ \widetilde K} d\theta$, and
${u}_{e(\tau)}$ is the support function of the rescaled solution at the entropy point $e(\tau)$ of $\widetilde \Omega_\tau$.

By the H\"older inequality, the right hand side of \eqref{deriventro} is nonpositive, and it is strictly negative unless $f$ is constant. The manifolds with constant $f$ are the stationary solutions of \eqref{GCFres}, which correspond to the homothetically shrinking solitons of \eqref{GCF}. Using this property, it was showed in \cite{AGN} that, for every $\beta > \frac{1}{n+2}$, convex hypersurfaces evolve into a singularity which is a soliton under rescaling. It was finally proved in \cite{DaskaChoi,BrendlDaskaChoi} that the only soliton is the sphere, thus proving Firey's conjecture.

Our result for flows by powers of the Gauss curvature is the following:
\begin{thm}
	Let $M_t$ be an ancient closed strictly convex solution of \eqref{GCF} with $\beta > \frac{1}{n+2}$. If there exists $\tilde C>0$ such that $\frac{\lambda_n}{\lambda_1}\leq \tilde C$ on $(-\infty, 0)$, then $M_t$ is a family of shrinking spheres.
\end{thm}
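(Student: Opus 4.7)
My plan is to pass to the rescaled flow \eqref{GCFres} and combine the monotonicity of the entropy $\mathcal{E}_\beta$ with an eternal-blow-down argument. The first step is to apply Theorem \ref{teorbound0} with homogeneity $\alpha = n\beta$ (since $K^\beta$ is homogeneous of degree $n\beta$ in the principal curvatures). This gives $\rho_\pm(t) \asymp |t|^{1/(n\beta+1)}$ and $\sup K^\beta \leq C|t|^{-n\beta/(n\beta+1)}$. Because the volume of $\Omega_t$ is comparable to $\rho_\pm(t)^{n+1}$, the rescaling factor $e^\tau$ is comparable to $|t|^{-1/(n\beta+1)}$, so $\tilde\rho_\pm(\tau)$ and $\tilde K^\beta$ are uniformly bounded above and below away from zero for all $\tau \in \mathbb{R}$. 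The pinching is scale-invariant, so $\tilde\lambda_n \leq \tilde C \tilde\lambda_1 \leq \tilde C \tilde K^{1/n} \leq C$, giving a uniform upper bound on each $\tilde\lambda_i$. A uniform lower bound on $\tilde\lambda_1$ follows from the lower bound on $\tilde\rho_-$ together with a Harnack-type argument for the speed; Krylov--Safonov then yields uniform $C^k$ estimates on $\tilde\varphi(\cdot,\tau)$ for every $k$.

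With these estimates in hand, I would run an eternal-blow-down at $-\infty$. Pick any $\tau_j \to -\infty$ and set $\tilde\varphi_j(\cdot,\tau) := \tilde\varphi(\cdot, \tau+\tau_j)$. After passing to a subsequence, $\tilde\varphi_j$ converges smoothly on compact subsets of $\mathbb{R}$ to a smooth eternal strictly convex solution $\tilde\varphi_\infty$ of \eqref{GCFres}. The entropy $\mathcal{E}_\beta(\tilde\Omega_\tau)$ is monotone nonincreasing by \eqref{deriventro} and bounded below on the compact family of admissible rescaled bodies, so the limit $E_{-\infty} := \lim_{\tau \to -\infty} \mathcal{E}_\beta(\tilde\Omega_\tau)$ is finite. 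For each fixed $\tau$, one has $\mathcal{E}_\beta(\tilde\Omega_{\tau+\tau_j}) \to E_{-\infty}$, so the entropy along $\tilde\varphi_\infty$ is constant. The equality case of Hölder's inequality in \eqref{deriventro} then forces $f = \tilde K^\beta/\tilde u_{e(\tau)}$ to be constant on $\mathbb{S}^n$, meaning $\tilde\varphi_\infty$ is a self-similarly shrinking soliton of \eqref{GCF}. By the classification of closed convex $\beta$-Gauss solitons with $\beta > 1/(n+2)$ (\cite{AGN, DaskaChoi, BrendlDaskaChoi}), $\tilde\varphi_\infty$ is the stationary unit sphere, so $E_{-\infty} = \mathcal{E}_\beta(B(1))$.

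To transfer this information to the entire ancient flow, I would invoke the minimality of the sphere for $\mathcal{E}_\beta$ under the volume constraint $|\Omega| = |B(1)|$, which is a byproduct of the forward-time convergence theorem of \cite{AGN, DaskaChoi, BrendlDaskaChoi}. Thus $\mathcal{E}_\beta(\tilde\Omega_\tau) \geq \mathcal{E}_\beta(B(1)) = E_{-\infty}$, while the monotonicity of the entropy gives $\mathcal{E}_\beta(\tilde\Omega_\tau) \leq E_{-\infty}$. The two inequalities collapse to $\mathcal{E}_\beta(\tilde\Omega_\tau) \equiv \mathcal{E}_\beta(B(1))$, and the characterization of the minimizer forces $\tilde\Omega_\tau$ to be a unit sphere for every $\tau$. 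Unraveling the rescaling, $M_t$ is a family of shrinking spheres.

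The main obstacle is the first step, namely establishing uniform lower bounds on the rescaled speed $\tilde K$, and consequently the uniform parabolicity needed to run Krylov--Safonov across the whole time axis. The pinching together with Theorem \ref{teorbound0} delivers the matching upper bounds essentially for free, but the uniform positivity of $\tilde K$ typically requires a Harnack-type argument for fully nonlinear parabolic flows; once this is in place, the rest reduces to a fairly standard application of the entropy formalism of \cite{AGN} combined with the soliton classification.
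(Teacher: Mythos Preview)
Your overall strategy matches the paper's: rescale, use entropy monotonicity, extract a subsequential backward limit that must be a soliton (hence a sphere by \cite{DaskaChoi,BrendlDaskaChoi}), and then combine entropy minimality at the sphere with monotonicity to force the entropy to be constant. The concluding two paragraphs of your proposal are essentially the paper's argument.

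However, there is a genuine gap precisely where you flag ``the main obstacle'': the uniform lower bound on $\tilde K$ as $\tau \to -\infty$. You propose to obtain this via ``a Harnack-type argument for the speed,'' but this does not work here, and the paper says so explicitly: the methods of \cite{GN,AGN} for bounding $\tilde K$ from below do not carry over to the backward limit. The paper's substitute is the main technical idea of the proof. First, without any lower bound on $\tilde K$, one observes that $K^\beta$ satisfies a porous-medium-type equation to which the degenerate/singular parabolic H\"older theory of DiBenedetto and DiBenedetto--Friedman applies; this yields a uniform H\"older bound on $\tilde K^\beta$ on parabolic cylinders of fixed size. Second, a quantitative refinement of Jensen's inequality sharpens \eqref{deriventro} to the implication
\[
\int_{\widetilde M_\tau} (f-\bar f)^2\, d\nu \geq \varepsilon_1 \ \Longrightarrow\ \frac{d}{d\tau}\mathcal{E}_\beta(\widetilde\Omega_\tau) \leq -\varepsilon_2.
\]
Now if $\tilde K(x_0,\tau_0)$ were very small, then since $\bar f$ is bounded below (via $\int \tilde K\, d\mu = \omega_n$) and $u_{e(\tau)}$ is bounded, $|f-\bar f|$ would be uniformly large on a parabolic neighbourhood of definite size by the H\"older estimate, forcing the entropy to drop by a fixed amount there. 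Boundedness of the entropy then rules this out except on finitely many such intervals, giving $\tilde K \geq \delta_0$ for all $\tau \ll 0$. Only after this does one get uniform parabolicity and Krylov--Safonov.

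A minor correction: since $\mathcal{E}_\beta(\widetilde\Omega_\tau)$ is nonincreasing in $\tau$, existence of $\lim_{\tau\to-\infty}\mathcal{E}_\beta$ requires an \emph{upper} bound, not a lower one; this comes from the two-sided bound on $u_{e(\tau)}$, which in turn needs the fact (Lemma~4.4 of \cite{AGN}) that the entropy point stays a definite distance from $\partial\widetilde\Omega_\tau$.
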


\begin{proof} 
	
	By our pinching assumption, the solution satisfies the conclusions of Theorem \ref{teorbound0}. Let us consider the rescaled flow \eqref{GCFres}. By construction, the domains $\widetilde{\Omega}_\tau$ contain the origin for all times. It is easy to check that estimate \eqref{supF} translates into a uniform upper bound on the Gauss curvature $\widetilde{K}$ of the rescaled hypersurfaces. By pinching, each principal curvature is also bounded. In addition, by \eqref{diameter}, the inner and outer radius are bounded from both sides by positive constants uniformly in time. Thanks to these bounds, we know from Lemma 4.4 of  \cite{AGN} that the entropy point of $\widetilde{\Omega}_\tau$ satisfies ${\rm dist}(e(\tau), \partial \widetilde{\Omega}_\tau) \geq \varepsilon_0$, for some $\varepsilon_0$ independent of $\tau$. Therefore, we have the estimates
	\begin{equation}\label{estgcf}
	\frac 1C \leq u_{e(\tau)} \leq C, \qquad \widetilde K \leq C
	\end{equation}
	on $\widetilde M_\tau$, where we denote by $C$ any large positive constant independent of $\tau$. It follows that the entropy $\mathcal{E}_{\beta}(\widetilde \Omega_\tau)$ is also bounded from above for all $\tau$. Since $\mathcal{E}_{\beta}(\widetilde \Omega_\tau)$ is monotone decreasing, it converges to some finite value $\mathcal{E}_{-\infty}$ as $\tau \to -\infty$. To conclude the proof, we need to show that $\widetilde M_\tau$ converges to a stationary point of the entropy as $\tau \to -\infty$.
	
	Using the property that
	$$
	\int_{\mathbb{S}^n} d\sigma_\tau = \int_{\widetilde M_\tau} u_{e(\tau)} d\mu = (n+1) {\rm Vol}( \widetilde \Omega_\tau) = \omega_n,
	$$
	we can rewrite formula \eqref{deriventro} as
	$$
	\frac{d}{d\tau} \mathcal{E}_{\beta}(\widetilde{\Omega}_\tau) \leq
	-\left[\frac{\int_{\widetilde M_\tau} f^{1+\frac 1\beta} d\nu }
	{\int_{\widetilde M_\tau} f^{\frac 1\beta} d\nu \cdot \int_{\widetilde M_\tau} f d\nu} -1 \right], 
	$$
	where $d\nu := \omega_n^{-1} u_{e(\tau)} d\mu$ is a probability measure on $\widetilde M_\tau$. Then, as in Proposition 4.3 in \cite{Sin}, we can use a refinement of Jensen's inequality to estimate the right-hand side and deduce that, for any $\varepsilon_1>0$ there exists $\varepsilon_2>0$ such that
	\begin{equation}\label{estmon}
	\int_{\widetilde M_\tau} \left( f - \bar f \right)^2 d \nu \geq \varepsilon_1 \ \Longrightarrow \ \frac{d}{d\tau} \mathcal{E}_{\beta}(\widetilde{\Omega}_\tau) \leq -\varepsilon_2,
	\end{equation}
	where we have set
	$$\bar f=\int_{\widetilde M_\tau} f d\nu = \frac1{\omega_n} \int_{\widetilde M_\tau} \widetilde K^\beta d\mu.
	$$ 
	
	We want to use \eqref{estmon} to show that $f$ converges to a constant as $\tau \to -\infty$. To do this, we need some uniform control on the regularity of the solution; we begin by estimating $\bar f$. Since the area of $\widetilde M_\tau$ is bounded from both sides by convexity and the bounds on the radii, an upper bound on $\bar f$ holds in view of  \eqref{estgcf}. To find a lower bound, we argue as follows: if $\beta \geq 1$, we can use the H\"older inequality and the bound on $|\widetilde M_\tau|$ to conclude
	$$
	\bar f \geq \frac1{\omega_n} \left( \int_{\widetilde M_\tau} \widetilde K d\mu \right)^{\beta} |\widetilde M_\tau|^{1- \beta} = \left( \frac{|\widetilde M_\tau|}{\omega_n} \right)^{1- \beta} \geq \frac 1C,
	$$
	while if $\beta <1$, we can write
	$$
	\bar f \geq \frac1{\omega_n} \frac 1{(\sup \widetilde K)^{1-\beta}}\int_{\widetilde M_\tau}  \widetilde K d\mu = \frac 1{(\sup \widetilde K)^{1-\beta}} \geq \frac 1C.
	$$

	Observe that we lack a lower bound on $\widetilde{K}$, and that the methods of \cite{GN,AGN} to obtain such a bound do not seem to work in the backward limit $\tau \to -\infty$. This means that we do not know yet whether our problem remains uniformly parabolic as time decreases.
	We then follow a strategy introduced by Schulze in \cite{Schu} and later used in \cite{CRS,Sin}, which exploits the theory for degenerate or singular parabolic equations. In our case, we can do computations similar to \cite[\S 7.2]{CRS}, and find that the speed $K^\beta$ satisfies an equation of porous medium type, to which the H\"older regularity results from \cite{D,DF} can be applied. In this way, we find that there exist $\alpha \in (0,1)$ and $\eta>0$ such that, for any $x_0 \in M$ and $\tau_0 \in \mathbb{R}$, the parabolic $\alpha$-H\"older norm of $\widetilde K^\beta$ on $B_\eta(x_0) \times (\tau_0-\eta, \tau_0+\eta)$ is bounded by some $C$ independent of $(x_0, \tau_0)$.
	
	Now we are able to prove a lower bound on $\widetilde K$ as $\tau \to -\infty$. In fact, suppose that $\widetilde K(x_0,\tau_0)=\delta_0$ at some $(x_0,\tau_0)$, with $\delta_0>0$ suitably small depending on the constants $C$ of the previous estimates. Then, the bounds from below on $u_e$ and $\bar f$ imply that $|f(x_0,\tau_0) - \bar f(\tau_0)|$ is far from zero. By H\"older continuity, the same holds for $(x,\tau) \in B_\eta(x_0) \times (\tau_0-\eta, \tau_0+\eta)$. It follows that 
	$\int_{\widetilde M_{\tau}} \left( f - \bar f \right)^2 d \nu \geq \varepsilon_1$ for $\tau \in [\tau_0-\eta,\tau_0+\eta]$, where $\epsilon_1,\eta$ do not depend on $\tau_0$. In view of \eqref{estmon} and of the boundedness of the entropy, this can only occur on a finite number of intervals. We deduce that $\widetilde K(\cdot,\tau) > \delta_0$ for all $\tau<<0$.
	
	As we have shown that the rescaled Gaussian curvature is uniformly bounded from both sides, we deduce from the pinching condition that each principal curvature is bounded between two positive constants. This implies that the equation is uniformly parabolic with bounded coefficients, and we have uniform estimates on all the derivatives of the solution from Krylov-Safonov and Schauder theory. Standard arguments ensure precompactness for the family $\widetilde{M}_\tau$, so that every sequence $\widetilde{M}_{\tau_k}$ with $\tau_k \rightarrow -\infty$ as $k \rightarrow +\infty$ admits a subsequence converging in $C^\infty$ to a limit $\widetilde{M}_{-\infty}$.
	
	We now want to show that the right-hand side of \eqref{deriventro} must vanish on $\widetilde{M}_{-\infty}$. To this purpose, we need some continuity with respect to $\tau$ of the function $u_{e(\tau)}$. In Lemma 4.3 of \cite{AGN} it was proved that, for convex bodies $\Omega$ satisfying uniform bounds on the inner and outer radii, the entropy point is a continuous function of $\Omega$ with respect to the Hausdorff distance. Since the speed of our flow is bounded, we deduce that for any $\epsilon>0$ there exists $\eta>0$ such that
	$$
	||e(\tau)-e_{-\infty}|| \leq \epsilon, \qquad \forall \tau \in [\tau_k-\eta,\tau_k+\eta],
	$$
	for all $k$ sufficiently large, where $e_{-\infty}$ is the entropy point of $\widetilde{\Omega}_{-\infty}$. Therefore $u_{e(\tau)}$ is uniformly close to $u_{-\infty}$. By the regularity of $\widetilde K$, we find that the right hand side of \eqref{deriventro} is uniformly close to the same expression computed on $\widetilde{M}_{-\infty}$ for $\tau \in [\tau_k-\eta,\tau_k+\eta]$. Therefore, if the right-hand side of \eqref{deriventro} is nonzero on $\widetilde{M}_{-\infty}$, it is also uniformly negative for $\tau$ in a set of infinite measure, in contradiction with the boundedness of the entropy.
	
	We conclude that the right-hand side of \eqref{deriventro} vanishes on $\widetilde{M}_{-\infty}$. As already recalled, it is proved in \cite{BrendlDaskaChoi, DaskaChoi}, that the only convex hypersurface with this property is the sphere. This implies that the whole flow converges to a sphere as $\tau \to \infty$. On the other hand, it is known that the entropy attains its minimum value on the sphere among all convex bodies with fixed volume. By monotonicity, the entropy must remain constant on the flow $\widetilde{M}_\tau$, and $\widetilde{M}_\tau$ is a sphere for every $\tau$.
\end{proof}

\subsection{Flows with high degree of homogeneity}

Flows with general degree of homogeneity have in general more complicate analytic properties than in the $1$-homogeneous case. Various authors have proved convergence of convex hypersurfaces to a spherical profile for certain specific speeds with homogeneity $\alpha>1$, see \cite{AS,CRS,Chow1,Chow2,Schu}, under the requirement that the initial datum satisfies a suitably strong pinching condition. A general result of this form has been obtained in \cite{AMC}, where a large class of speeds is considered, with no structural assumptions such as convexity or concavity. Here we show that an ancient solution which satisfies the same pinching requirement as in \cite{AMC} is necessarily a shrinking sphere.

We consider a speed $f(\lambda)$ satisfying (H1)--(H2) for some $\alpha>1$ and its associated function $F(W)=f(\lambda(W))$.  For simplicity, we assume the normalization $f(1,\dots,1)=n$.  By the smoothness and homogeneity of $F$, there exists $\mu>0$ such that, for any matrices $A, B$:
\begin{equation}
| \ddot{F}^{kl,rs}(A)B_{kl}B_{rs}| \leq \mu H^{\alpha-2}|B|^2. \label{Fsecondo}
\end{equation}
Since by symmetry $\dot{F}|_{H I}=\alpha H^{\alpha - 1}I$, where $I$ is the identity matrix, it is easy to deduce
 \begin{align}
(\alpha H^{\alpha-1}-\mu H^{\alpha-2}|\mathring{h}|)I \leq &\dot{F}\leq (\alpha H^{\alpha-1}+\mu H^{\alpha-2}|\mathring{h}|)I,
\label{Fprimo}\\
H^{\alpha} -\frac{\mu}{2\alpha}H^{\alpha-2}|\mathring{h}|^2\leq &F \leq H^{\alpha}+\frac{\mu}{2\alpha}H^{\alpha-2} |\mathring{h}|^2.
\label{Fzero}
\end{align}
\begin{thm}
If $M_t=\varphi(M,t)$, $t \in (-\infty,0)$ is an ancient solution of \eqref{flowh}, with $F$ homogeneous of degree $\alpha > 1$ satisfying the additional conditions above and such that $|\mathring{h}|^2 \leq \epsilon H^2$ holds for all times $t \in (-\infty,0)$ for a suitable $\epsilon \in (0, \frac{1}{n(n-1)})$ depending only on $\mu, \alpha, n$, then it is a family of shrinking spheres.
\end{thm}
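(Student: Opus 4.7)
The plan is to adapt the integral-estimate strategy of \S2.1 to the nonlinear setting, using Theorem \ref{teorbound0} in place of the ad hoc volume bound used there, and the integral evolution inequalities from \cite{AMC} in place of those of \cite{AB}.

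\textbf{Step 1 (Reduction to the cone condition).} First I would observe that the assumption $|\mathring h|^2 \leq \epsilon H^2$ with $\epsilon < 1/(n(n-1))$ implies $|h|^2 = |\mathring h|^2 + H^2/n \leq H^2/(n-1)$, i.e. Huisken's classical quadratic pinching, which yields strict convexity and a two-sided bound $\lambda_n \leq C_1 \lambda_1$ with $C_1=C_1(\epsilon,n)$. In particular the hypotheses of Theorem \ref{teorbound0} hold, so that
\[
C_2^{-1}|t|^{1/(\alpha+1)} \leq \rho_-(t)\leq \rho_+(t)\leq C_2|t|^{1/(\alpha+1)}, \qquad \sup_{M_t} H \leq C_3|t|^{-1/(\alpha+1)}.
\]
Convexity gives nonnegative Ricci curvature via the Gauss equations, and then Bishop--Gromov together with the diameter bound yields $|M_t|\leq C|t|^{n/(\alpha+1)}$, exactly as in the claim \eqref{claimvol}.

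\textbf{Step 2 (AMC-type integral inequality).} Introduce the scale-invariant quantity $f_\sigma := |\mathring h|^2/H^{2(1-\sigma)}$, which vanishes precisely at umbilical points; as in \S2.1 it is enough to show $f_\sigma\equiv 0$ on some $(-\infty,T_1]$. The core technical step is to derive, for $p$ large and $\sigma\leq c/\sqrt p$, an inequality of the form
\[
\frac{d}{dt}\int_{M_t}f_\sigma^p\,d\mu_t \;\leq\; -c\,p\sigma\int_{M_t}H^\alpha f_\sigma^p\,d\mu_t.
\]
This is obtained by plugging $f_\sigma^p$ into the evolution equation and carrying out the same style of computation as in \cite[\S4--\S5]{AMC}, in which the bounds \eqref{Fsecondo}--\eqref{Fzero} on $F,\dot F,\ddot F$ are used to control the reaction terms, the divergence identity for $\nabla_j \dot F^{ij}$ handles the cross terms, and the Kato-type inequality $|\nabla h|^2\geq \tfrac{3}{n+2}|\nabla H|^2$ provides the negative gradient contribution that absorbs the good term $\int \dot F^{ij}\nabla_i H\nabla_j H\, f_\sigma^{p-1}/H^{2-2\sigma}$ after an appropriate Peter--Paul splitting. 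The quantitative smallness of $\epsilon$ (depending only on $\mu,\alpha,n$) is exactly what guarantees the strict negativity of the resulting coefficient.

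\textbf{Step 3 (Stampacchia iteration).} Using the pinching in the form $f_\sigma\leq \epsilon H^{2\sigma}$ one has $H^\alpha f_\sigma^p \geq c\,\epsilon^{-\alpha/(2\sigma)}f_\sigma^{p+\alpha/(2\sigma)}$, and then Hölder's inequality combined with the volume bound $|M_t|\leq C|t|^{n/(\alpha+1)}$ turns the differential inequality of Step 2 into an ODE
\[
\dot\psi \leq -C\,\psi^{1+\alpha/(2\sigma p)}|t|^{-n\alpha/(2(\alpha+1)\sigma p)}, \qquad \psi(t)=\int_{M_t}f_\sigma^p\,d\mu_t.
\]
Rewriting as $\tfrac{d}{dt}\psi^{-\alpha/(2\sigma p)}\geq C|t|^{-n\alpha/(2(\alpha+1)\sigma p)}$ and integrating on $(T_0,t]$, the right-hand side diverges as $T_0\to -\infty$ provided $p$ is chosen with $\sigma p > n\alpha/(2(\alpha+1))$. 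This forces $\psi\equiv 0$ on $(-\infty,T_1]$, hence $\mathring h\equiv 0$, and $M_t$ is a family of shrinking spheres.

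\textbf{Main obstacle.} The technically delicate point is Step 2: the Andrews--McCoy computations in \cite{AMC} were designed for \emph{finite-time} convergence to a round point, and one must verify that the resulting integral inequality retains a strictly negative right-hand side under only the a priori pinching $|\mathring h|^2\leq \epsilon H^2$, uniformly in time on $(-\infty,0)$. Tracking how the non-umbilical parts of $\dot F$ and $\ddot F$ interact with the reaction terms, and how much room the Kato inequality leaves for absorption, is what produces the quantitative threshold on $\epsilon$ in the statement.
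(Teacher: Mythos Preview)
Your outline follows the integral-estimate template of \S2.1, but the paper takes a substantially simpler route: a direct pointwise maximum principle on $f_\sigma=|\mathring h|^2/H^{2-\sigma}$, with no integration, no Poincar\'e-type inequality, no volume bound, and no appeal to Theorem \ref{teorbound0}.

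The mechanism you describe in Step 2 (negativity coming from the Kato inequality after a Peter--Paul absorption, producing a coefficient $-cp\sigma$) is the $\alpha=1$ picture. For $\alpha>1$ the decisive negativity sits in the \emph{zero-order reaction terms}, and is of order $(\alpha-1)$ rather than $\sigma$. Concretely, the paper computes $\partial_t f_\sigma=\mathcal L f_\sigma+(\text{first-order})+(\text{gradient})+(\text{reaction})$ and rewrites the reaction part as
\[
\sigma f_\sigma\Bigl(\dot F^{kl}h_{km}h^m_l+\tfrac{1-\alpha}{H}F|h|^2\Bigr)+\tfrac{2(1-\alpha)}{H}F\Bigl(\tfrac{nC-H|h|^2}{nH^{1-\sigma}}-|h|^2 f_\sigma\Bigr).
\]
Using \eqref{Fprimo}--\eqref{Fzero} the first bracket is $\leq \sigma(\alpha+\mu)H^{\alpha+1}f_\sigma$, while the algebraic inequality $nC-(1+nf_0)H|h|^2\geq \tfrac12 f_0H^3$ from \cite{AMC} makes the second bracket $\leq \tfrac{1-\alpha}{2n}H^{\alpha+1}f_\sigma$. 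Choosing $\sigma<\tfrac{\alpha-1}{4n(\alpha+\mu)}$ the sum is $\leq \tfrac{1-\alpha}{4n}H^{\alpha+1}f_\sigma$. The gradient block is handled pointwise: the term \eqref{gradneg} from \cite{AMC} contributes $-\tfrac{4(n-1)}{3n}(\alpha-\mu\sqrt\epsilon)H^{\alpha-1}|\nabla h|^2$, whose $\alpha$-part absorbs the remaining gradient pieces once $\epsilon$ is small. The outcome is the pointwise inequality
\[
\partial_t f_\sigma \leq \mathcal L f_\sigma+\tfrac{2(2-\sigma)}{H}\dot F^{ij}\nabla_iH\nabla_jf_\sigma+\tfrac{1-\alpha}{4n}H^{\alpha+1}f_\sigma,
\]
and since $f_\sigma\leq H^\sigma$ one gets $\dot\psi\leq -\tfrac{\alpha-1}{4n}\psi^{1+(\alpha+1)/\sigma}$ for $\psi(t)=\max_{M_t}f_\sigma$. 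The ODE comparison of Proposition \ref{pro1} then forces $\psi\equiv 0$.

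Your integral route could in principle be pushed through---after all, integrating the pointwise inequality above against $pf_\sigma^{p-1}$ immediately gives $\frac{d}{dt}\int f_\sigma^p\leq -c\int H^{\alpha+1}f_\sigma^p$---but then Steps 1 and 3 are superfluous: the diameter/volume machinery is never invoked. The point is that $\alpha>1$ buys you a genuinely negative reaction term, so the delicate balancing of gradient terms against a Poincar\'e inequality (which is what drives the $\alpha=1$ argument in \S2.1 and in \cite{HS}) is simply not needed here.
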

\begin{proof}
This time, similarly to the case of Mean Curvature Flow, we study the function
\begin{equation}
f_\sigma=\frac{|\mathring{h}|^2}{H^{2-\sigma}}
\end{equation}
and we want to show that it is identically zero for all negative times, for a suitable choice of $\sigma \in (0,1)$. We observe that, under our hypothesis, $f_0 \leq \epsilon$. If $\epsilon$ is small enough, we obtain from \eqref{Fzero}
\begin{equation}\label{below}
F \geq  \frac{\alpha}2 H^\alpha.
\end{equation}
The evolution equations for the relevant quantities are computed in [AMC]:
\begin{align}
\frac{\partial}{\partial t} H =& \mathcal{L}H+\ddot{F}^{kl,rs}\nabla^i h_{kl}\nabla_i h_{rs}+\dot{F}^{kl}h_{km}h^m_l H + (1-\alpha)F|h|^2\\[5 pt]
\frac{\partial}{\partial t} G =& \mathcal{L}G + [\dot{G}^{ij}\ddot{F}^{kl,rs}-\dot{F}^{ij}\ddot{G}^{kl,rs}]\nabla_i h_{kl}\nabla_j h_{rs}+\dot{F}^{kl}h_{km}h_l^m\dot{G}^{ij}h_{ij} \notag\\
&+ (1-\alpha)F\dot{G}^{ij}h_{im}h^m_j
\end{align}
if $G$ is any smooth symmetric function of the eigenvalues of the Weingarten operator.
Using these, we compute the evolution equation for $f_{\sigma}$:
\begin{align} \label{evfsigma}
\frac{\partial}{\partial t} f_{\sigma} &= \mathcal{L}f_{\sigma}+\frac{(2-\sigma)}{H}\dot{F}^{ij}\left[\nabla_i H \nabla_j f_{\sigma} + \nabla_j H \nabla_i f_{\sigma}\right]  \\[5pt]
&+ \frac{1}{H^{2-\sigma}}\left[2 \left(h^{ij} - \frac{1}{n}H g^{ij} -\frac{2-\sigma}{2} f_0 H g^{ij} \right)\ddot{F}^{kl,rs}\right]\nabla_i h_{kl} \nabla_j h_{rs} \notag\\[5pt]
& - \frac{(1-\sigma)(\sigma - 2)}{H^{2-\sigma}}\dot{F}^{ij}\nabla_i H \nabla_j H f_0 \notag \\[5pt]
&- \frac{2}{H^{2-\sigma}}\dot{F}^{ij}\left[\nabla_i h_{kl}\nabla_j h^{kl} -\frac{1}{n}\nabla_i H \nabla_j H\right] \notag \\[5pt]
&+ \sigma f_{\sigma}\dot{F}^{kl}h_{km}h^m_l + \frac{2(1-\alpha)}{n H^{2-\sigma}}F(nC-H|h|^2)-\frac{(1-\alpha)(2-\sigma)}{H}F|h|^2f_{\sigma},\notag
\end{align}
where $C=\sum_i \lambda_i^3$.
We estimate the terms in second row, using $f_0 \leq \epsilon$, $0<\sigma<1$ and the bound \eqref{Fsecondo}:
\begin{align}
&\left|2 \left(h^{ij} - \frac{1}{n}H g^{ij} -\frac{2-\sigma}{2} f_0 H g^{ij} \right)\ddot{F}^{kl,rs}\nabla_i h_{kl} \nabla_j h_{rs} \right|  \\[5pt]
&\leq 2 |\ddot{F}|\, |\nabla h|^2 H \sqrt{\epsilon}\left(1+\sqrt{n\epsilon} \right) \notag\\
&\leq 2 \mu H^{\alpha-1}|\nabla h|^2 \sqrt{\epsilon}\left(1+\sqrt{n\epsilon} \right).\notag
\end{align}
The term in the third row satisfies
\begin{align}
&\left| \dot{F}^{ij} (1-\sigma)(\sigma-2) f_0 \nabla_i H \nabla_j H\right| \leq 2 \epsilon |\dot{F}| |\nabla H|^2  \notag \\
&\leq 2 \epsilon \left(\alpha +\mu \sqrt{\epsilon}\right) H^{\alpha -1} \frac{n+2}{3} |\nabla h|^2.
\end{align}
The next term gives a negative contribution. In fact, it was shown in the proof of Theorem 5.1 in \cite{AMC} that
\begin{equation} \label{gradneg}
-2 \dot{F}^{ij}\left[\nabla_i h_{kl}\nabla_j h^{kl} -\frac{1}{n}\nabla_i H \nabla_j H\right]\leq -\frac{4(n-1)}{3n}H^{\alpha-1}(\alpha-\mu \sqrt{\epsilon})|\nabla h|^2.
\end{equation}
We observe that all positive terms occurring in the above estimates can be made arbitrarily small by choosing a small $\epsilon>0$, and the total contribution of the second, third and fourth row of \eqref{evfsigma} is nonpositive due to the negative term with the $\alpha$ factor in \eqref{gradneg}.

To estimate the terms in the last row, we rewrite them as
\begin{equation*}
\sigma f_\sigma \left(\dot{F}h_{km}h^m_l + \frac{(1-\alpha)}{H}F|h|^2\right) + \frac{2(1-\alpha)}{H}F\left(\frac{nC-H|h|^2}{nH^{1-\sigma}} - |h|^2f_{\sigma}\right).
\end{equation*}
The second part will give a negative contribution. To see this, we first apply Lemma 2.3 in \cite{AMC} to obtain
\begin{equation*}
nC-(1+n f_0)H|h|^2 \geq f_0(1+nf_0)(1-\sqrt{n(n-1)f_0})H^3 \geq \frac{1}{2}f_0H^3.
\end{equation*}
Then
\begin{equation}
\frac{nC-H|h|^2}{nH^{1-\sigma}}-|h|^2f_{\sigma} \geq \frac{nf_{\sigma} H |h|^2 + \frac{1}{2}f_{\sigma}H^3}{nH} - |h|^2 f_{\sigma} = f_{\sigma} \frac{H^2}{2n},
\end{equation}
and we conclude, using $\alpha>1$ and \eqref{below},
\begin{equation}
\frac{2(1-\alpha)}{H}F\left(\frac{nC-H|h|^2}{nH^{1-\sigma}} - F|h|^2f_{\sigma}\right) \leq (1-\alpha)f_{\sigma}\frac{H^{\alpha+1}}{2n}.
\end{equation}
At the same time, we have
\begin{align}
\sigma f_\sigma &\left(\dot{F}h_{km}h^m_l + \frac{(1-\alpha)}{H}F|h|^2\right)  \leq \sigma f_\sigma (\dot{F}h_{km}h^m_l)  \notag \\[5 pt]
&\leq \sigma f_{\sigma}\left(\alpha H^{\alpha-1}+\mu H^{\alpha -2}|\mathring{h}|\right)|h|^2 \leq \sigma f_{\sigma} (\alpha + \mu)H^{\alpha+1}.
\end{align}
The above estimates and \eqref{evfsigma} imply that, choosing $\sigma < \frac{\alpha - 1}{4n(\alpha+\mu)}$, we have
\begin{equation*}
\frac{\partial}{\partial t} f_{\sigma} \leq \mathcal{L}f_{\sigma}+\frac{2(2-\sigma)}{H}\dot{F}^{ij}\, \nabla_iH\nabla_jf_\sigma  + \frac{(1-\alpha)}{4n}f_{\sigma} H^{\alpha + 1}.
\end{equation*}
Since $f_{\sigma}\leq H^{\sigma}$, if we set $\psi(t)=\max_{M_t} f_\sigma$, we find
$$
\frac{d}{dt} \psi \leq - \frac{(\alpha-1)}{4n} \psi^{1+\frac{\alpha+1}{\sigma}}.
$$
An easy comparison argument, similar to the one in Proposition \ref{pro1},
shows that $\psi(t)$ cannot be defined for all $t \in (-\infty, 0)$ unless it is identically zero. Therefore, $|\mathring{h}|^2 \equiv 0$ on our solution and the assertion is proved.
\end{proof} \medskip

\noindent {\bf Acknowledgments} 
Carlo Sinestrari was partially supported by the research group GNAMPA of INdAM (Istituto Nazionale di Alta Matematica). \medskip

\bibliographystyle{abbrv}
\bibliography{biblio}

\begin{thebibliography}{10}

\bibitem{AS}
R.~Alessandroni and C.~Sinestrari.
\newblock Evolution of hypersurfaces by powers of the scalar curvature.
\newblock {\em Ann. Sc. Norm. Super. Pisa Cl. Sci. (5)}, 9(3):541--571, 2010.

\bibitem{A1}
B.~Andrews.
\newblock Contraction of convex hypersurfaces in {E}uclidean space.
\newblock {\em Calc. Var. Partial Differential Equations}, 2(2):151--171, 1994.

\bibitem{A2}
B.~Andrews.
\newblock Harnack inequalities for evolving hypersurfaces.
\newblock {\em Math.Z.}, 217(2):179--197, 1994.

\bibitem{A2b}
B.~Andrews.
\newblock Contraction of convex hypersurfaces by their affine normal.
\newblock {\em J. Differential Geom.}, 43(2):207--230, 1996.

\bibitem{A4}
B.~Andrews.
\newblock Noncollapsing in mean-convex {M}ean {C}urvature {F}low.
\newblock {\em Geom. Topol.}, 16(3):1413--1418, 2012.

\bibitem{AB}
B.~Andrews and C.~Baker.
\newblock Mean {C}urvature {F}low of pinched submanifolds to spheres.
\newblock {\em J. Differential Geom.}, 85(3):357--395, 2010.

\bibitem{AGN}
B.~Andrews, P.~Guan, and L.~Ni.
\newblock Flow by powers of the {G}auss curvature.
\newblock {\em Adv. Math.}, 299:174--201, 2016.

\bibitem{AMC}
B.~Andrews and J.~McCoy.
\newblock Convex hypersurfaces with pinched principal curvatures and flow of
  convex hypersurfaces by high powers of curvature.
\newblock {\em Trans. Amer. Math Soc.}, 364(7):3427--3447, 2012.

\bibitem{AMCZ}
B.~Andrews, J.~McCoy, and Y.~Zheng.
\newblock Contracting convex hypersurfaces by curvature.
\newblock {\em Calc. Var. Partial Differential Equations}, 47(3-4):611--665,
  2013.

\bibitem{An1}
S.~Angenent.
\newblock Shrinking doughnuts.
\newblock {\em Progr. Nonlinear Differential Equations Appl.}, 7:21--38, 1992.

\bibitem{Ang}
S.~Angenent.
\newblock Formal asymptotic expansions for symmetric ancient ovals in {M}ean
  {C}urvature {F}low.
\newblock {\em Netw. Heterog. Media}, 8(1):1--8, 2013.

\bibitem{ADS}
S.~Angenent, P.~Daskalopoulos, and N.~Sesum.
\newblock Unique asymptotics of ancient convex {M}ean {C}urvature {F}low
  solutions.
\newblock {\em preprint arXiv:1503.01178}, 2015.

\bibitem{BS}
I.~Bakas and C.~Sourdis.
\newblock Dirichlet sigma models and {M}ean {C}urvature {F}low.
\newblock {\em J. High Energy Physics}, 06:057, 2007.

\bibitem{B}
C.~Baker.
\newblock The {M}ean {C}urvature {F}low of submanifolds of high codimension
  ({P}h{D} thesis).
\newblock {\em preprint arXiv:1104.4409}, 2011.

\bibitem{BLT}
T.~Bourni, M.~Langford, and G.~Tinaglia.
\newblock A collapsing ancient solution of {M}ean {C}urvature {F}low in
  $\mathbb{R}^3$.
\newblock {\em preprint arXiv:1705.06981}, 2011.

\bibitem{BrendlDaskaChoi}
S.~Brendle, K.~Choi, P.~Daskalopoulos, et~al.
\newblock Asymptotic behavior of flows by powers of the {G}aussian curvature.
\newblock {\em Acta Mathematica}, 219(1):1--16, 2017.

\bibitem{BHS}
S.~Brendle, G.~Huisken, and C.~Sinestrari.
\newblock Ancient solutions to the {R}icci flow with pinched curvature.
\newblock {\em Duke Math. J}, 158:537--551, 2011.

\bibitem{BIS}
P.~Bryan, M.~Ivaki, and J.~Scheuer.
\newblock On the classification of ancient solutions to curvature flows on the
  sphere.
\newblock {\em preprint arXiv:1604.01694}, 2016.

\bibitem{BrLo}
P.~Bryan and J.~Louie.
\newblock Classification of convex ancient solutions to {C}urve {S}hortening
  {F}low on the sphere.
\newblock {\em J. Geom. Anal.}, 26, 2016.

\bibitem{CRS}
E.~Cabezas-Rivas and C.~Sinestrari.
\newblock Volume-preserving flow by powers of the mth mean curvature.
\newblock {\em Calc. Var. Partial Differential Equations}, 38(3):441--469,
  2010.

\bibitem{C}
B.-Y. Chen.
\newblock Some pinching and classification theorems for minimal submanifolds.
\newblock {\em Arch. Math. (Basel)}, 60(6):568--578, 1993.

\bibitem{DaskaChoi}
K.~Choi and P.~Daskalopoulos.
\newblock Uniqueness of closed self-similar solutions to the {G}auss curvature
  flow.
\newblock {\em preprint arXiv:1609.05487}, 2016.

\bibitem{Chow1}
B.~Chow.
\newblock Deforming convex hypersurfaces by the $ n $ th root of the {G}aussian
  curvature.
\newblock {\em J. Differential Geom.}, 22(1):117--138, 1985.

\bibitem{Chow2}
B.~Chow.
\newblock Deforming convex hypersurfaces by the square root of the scalar
  curvature.
\newblock {\em Invent. Math.}, 87(1):63--82, 1987.

\bibitem{ChowHar}
B.~Chow.
\newblock On {H}arnack's inequality and entropy for the {G}aussian curvature
  flow.
\newblock {\em Comm. Pure Appl. Math.}, 44(4):469--483, 1991.

\bibitem{DHS}
P.~Daskalopoulos, R.~Hamilton, and N.~Sesum.
\newblock Classification of compact ancient solutions to the {C}urve
  {S}hortening {F}low.
\newblock {\em J. Differential Geom.}, 84:455--464, 2010.

\bibitem{D}
E.~DiBenedetto.
\newblock {\em Degenerate parabolic equations}.
\newblock Springer, New York, 1993.

\bibitem{DF}
E.~DiBenedetto and A.~Friedman.
\newblock H{\"o}lder estimates for nonlinear degenerate parabolic systems.
\newblock {\em J. Reine Angew. Math}, 357(4):1--22, 1985.

\bibitem{Firey}
W.~J. Firey.
\newblock Shapes of worn stones.
\newblock {\em Mathematika}, 21(1):1--11, 1974.

\bibitem{Ger}
C.~Gerhardt.
\newblock {\em Curvature problems, Series in Geometry and Topology, vol. 39}.
\newblock International Press, Somerville, MA, 2006.

\bibitem{GN}
P.~Guan and L.~Ni.
\newblock Entropy and a convergence theorem for {G}auss curvature flow in high
  dimension.
\newblock {\em J. Eur. Math. Soc.}, 19(12):3735--3761, 2017.

\bibitem{H}
R.~Hamilton.
\newblock Four-manifolds with positive curvature operator.
\newblock {\em J. Differential Geom.}, 24(2):153--179, 1986.

\bibitem{H2}
R.~Hamilton.
\newblock Formation of singularities in the {R}icci flow.
\newblock {\em Surveys in differential geometry}, 2:7--136, 1995.

\bibitem{HH}
R.~Haslhofer and O.~Hershkovits.
\newblock Ancient solutions of the {M}ean {C}urvature {F}low.
\newblock {\em Comm. Anal. Geom.}, 24(3):593--604, 2016.

\bibitem{HK}
R.~Haslhofer and B.~Kleiner.
\newblock Mean {C}urvature {F}low of mean convex hypersurfaces.
\newblock {\em Comm. Pure Appl. Math.}, 70(3):511--546, 2017.

\bibitem{HS99}
G.~Huisken and C.~Sinestrari.
\newblock Convexity estimates for {M}ean {C}urvature {F}low and singularities
  of mean convex surfaces.
\newblock {\em Acta Math.}, 183:45--70, 1999.

\bibitem{HS}
G.~Huisken and C.~Sinestrari.
\newblock Convex ancient solutions of the {M}ean {C}urvature {F}low.
\newblock {\em J. Differential Geom.}, 101(2):267--287, 2015.

\bibitem{K}
N.~V. Krylov.
\newblock {\em Nonlinear elliptic and parabolic equations of the second order},
  volume~7.
\newblock Springer, 1987.

\bibitem{L1}
M.~Langford.
\newblock A general pinching principle for {M}ean {C}urvature {F}low and
  applications.
\newblock {\em Calc. Var. Partial Differential Equations}, 56(4), 2017.

\bibitem{LL}
M.~Langford and S.~Lynch.
\newblock Sharp one-sided curvature estimates for fully nonlinear curvature
  flows and applications to ancient solutions.
\newblock {\em preprint arXiv:1704.03802}, 2017.

\bibitem{LVZ}
S.~Lukyanov, E.~Vitchev, and A.~Zamolodchikov.
\newblock Integrable model of boundary interaction: The paperclip.
\newblock {\em J. Nuclear Physics B}, 683:423--454, 2004.

\bibitem{LN}
S.~Lynch and H.~T. Nguyen.
\newblock Pinched ancient solutions to the high codimension {M}ean {C}urvature
  {F}low.
\newblock {\em preprint arXiv:1709.09697}, 2017.

\bibitem{pipolisin}
G.~Pipoli and C.~Sinestrari.
\newblock Mean {C}urvature {F}low of pinched submanifolds of {CP}n.
\newblock {\em Communications in Analysis and Geometry}, 25(4):799--846, 2017.

\bibitem{Schu}
F.~Schulze.
\newblock Convexity estimates for flows by powers of the mean curvature.
\newblock {\em Ann. Sc. Norm. Super. Pisa Cl. Sci. (5)}, 5:261--277, 2006.

\bibitem{Sin}
C.~Sinestrari.
\newblock Convex hypersurfaces evolving by volume preserving curvature flows.
\newblock {\em Calc. Var. Partial Differential Equations}, 54(2):1985--1993,
  2015.

\bibitem{T}
K.~Tso.
\newblock Deforming a hypersurface by its {G}auss-{K}ronecker curvature.
\newblock {\em Comm. Pure Appl. Math.}, 38(6):867--882, 1985.

\bibitem{W03}
B.~White.
\newblock The nature of singularities in {M}ean {C}urvature {F}low of
  mean-convex sets.
\newblock {\em J. Amer. Math. Soc.}, 16(1):123--138, 2003.

\end{thebibliography}
\end{document}